\documentclass[12pt]{amsart}

\usepackage{amscd,amssymb, latexsym}
\usepackage{graphicx}
\usepackage{epstopdf}
\usepackage{amsmath,amsthm,amstext, amsbsy, amssymb}
\usepackage{mathrsfs}

\usepackage{enumerate}


\topmargin=-0.7cm
\textwidth=15.6cm
\textheight=20.9cm
\oddsidemargin=0.16cm
\evensidemargin=0.16cm

\newtheorem{lmm}{Lemma}
\newtheorem{thr}{Theorem}
\newtheorem{stm}{Statement}
\newtheorem{crl}{Corollary}

\newtheorem{defn}{Definition}

\newcommand{\R}{\mathbb{R}}
\newcommand{\Compl}{\mathbb{C}}

\newcommand{\smm}{\displaystyle\sum}

\newcommand{\loctheta}{\tilde{\theta}}
\newcommand{\eps}{\varepsilon}
\newcommand{\mes}{\mu}
\newcommand{\low}{\varkappa}
\newcommand{\scprod}{\delta}
\newcommand{\spd}{\sigma}
\newcommand{\data}{\nu}


\begin{document}

\sloppy
\title[Jacobi matrices with lacunary spectrum]{Jacobi matrices with lacunary spectrum}
\author{Ilya Losev}

\address{
Ilya Losev:
\newline Department of Mathematics and Computer Science, St.~Petersburg State University, St. Petersburg, Russia
\newline {\tt Losevilya14@gmail.com}
}

\thanks{The work was supported by the Russian Science Foundation grant 19-11-00058.}

\begin{abstract}
We find asymptotics of entries of Jacobi matrices with lacunary spectral data under some additional growth conditions. We also prove the inverse results. In addition, we study connections between Jacobi matrices, canonical systems and de Branges spaces for lacunary spectral data.
\end{abstract}


\keywords{Jacobi matrices, direct and inverse spectral problems, canonical systems, de Branges spaces, Krein -- de Branges theory}
\subjclass[2010]{Primary 47B36; 
Secondary 
30B70, 
34L40. 
}

\maketitle

\section{Introduction and main results}

One of the main problems in mathematical physics is to find correspondences between classes of potentials and spectral data for some systems of second order differential equations. We are studying connections between some classes of Jacobi matrices, de Branges spaces and canonical systems. The famous Krein -- de Branges theory deals with correspondence between de Branges spaces and canonical systems. On the other hand, every Jacobi matrix generates a canonical system of special type, i.e. canonical system with Hamiltonian consisting of one chain of indivisible intervals, see e.g.  \cite[Theorems 2,4]{RRomJM}, and \cite{deBrangesHilbSp, RRom} for general theory. Canonical systems, Krein -- de Branges theory, Jacobi matrices theory and connections between them have been intensively studied for the last 60 years \cite{ArovDym, GaussPr, KacKrein, Levitan, Simon}. For recent developments see, e.g. \cite{KWW, Remling, RemlingBook}.

\medskip

Very few one-to-one correspondences between Hamiltonian classes (canonical systems) and spectral data (de Branges spaces) are known. The main examples are de Branges solution of inverse spectral problem for arbitrary canonical system on finite interval \cite{deBrangesHilbSp} and Krein -- de Branges type formula \cite[Theorem 11]{RRom}. Another example is a localization phenomenon: de Branges space has a localization property (for zeros) if and only if the corresponding Hamiltonian (canonical system) consists of indivisible intervals accumulating only to the left \cite[Theorem 1.7]{ZeroLoc}. Recently R. Bessonov and S. Denisov found a beautiful description of Szeg\H{o} measures in terms of Hamiltonians \cite[Theorem 1]{BesDen}. R. Romanov was able to find the precise formula for the order of de Branges space generated by zero-diagonal Jacobi matrix \cite[Theorem 2]{RomValent}.

\medskip

The aim of this paper is to find some properties of Jacobi matrices or Hamiltonians (canonical systems) generated by so called \textit{small de Branges spaces}, i.e. such that spectra $\{t_n\}$ is lacunary, $t_{n+1}>\lambda t_n$ for some $\lambda>1$. Class of small de Branges spaces, introduced in \cite{DiscrHilb}, naturally appears in many topics of complex analysis such as spectral synthesis \cite[Theorems 1.1, 1.2]{SpectrSynth}, Riesz bases of reproducing kernels in Fock-type spaces \cite{FockRiesz} and even in Gabor analysis.
We are able to show that each small de Branges space (i.e. with lacunary spectral data) generates a Jacobi matrix with exponentially increasing entries (Theorem \ref{FTTheorem}). On the other hand, if Jacobi matrix has exponentially increasing entries then (under some technical assumptions) the spectral data is also lacunary (Theorems \ref{ThrJacobi}, \ref{ThrRevearsed}, see also Theorems \ref{ThrCanon}, \ref{ThrRevearsedCanon} as their versions for canonical systems). 

\subsection{Jacobi matrices and de Branges spaces}
We give a short overview of Jacobi matrices and de Branges spaces theory.
\subsubsection*{Jacobi matrices.} Consider a Jacobi matrix
\begin{equation*}
J=\begin{pmatrix}
  	q_1 & \rho_1 & 0   & 0  &\cdots\\
  	\rho_1 & q_2 & \rho_2 & 0 &\cdots\\
  	0   & \rho_2 & q_3 & \rho_3 &\cdots\\ 
  	\vdots  & \vdots & \vdots & \vdots  & \ddots\\
 	\end{pmatrix}, \qquad q_k\in\R, \, \rho_k>0.
\end{equation*}
Let 
\begin{equation*}
f(z) = \left\langle \left(J-z\right)^{-1}e_1, e_1 \right\rangle, \quad e_1 = (1, 0, 0 , 0, \ldots)^{\mathsf{T}}, \qquad f:\Compl^{+} \to \Compl^{+}
\end{equation*}
be the corresponding Herglotz function (here $\Compl^{+} = \{z \in \Compl | \, \Im z >0\}$). It is known that any Herglotz function can be represented in the following form
\begin{equation*}
az+b+
\int_{\R}\left(\frac{1}{t-z}-\frac{t}{1+t^2}\right)d\mes(t), \qquad 
\end{equation*}
where $a \geq 0$, $b\in \R$ and $\mes \geq 0$, $\int_{\R} \frac{d\mes}{1+t^2}<\infty$. In our case $a=0$, and
the corresponding measure $\mes$ is called \textit{spectral data} of the Jacobi matrix $J$.

We are interested in the connection between coefficients of Jacobi matrix $\{q_j, \rho_j\}$ and spectral data $\mes$. Our main goal is to describe some classes of spectral data for which asymptotics of the corresponding coefficients $\{q_j, \rho_j\}$ can be found.

This connection is given by Stieltjes algorithm. If we write $f(z)$ as a continued fraction, we will have 
\begin{equation*}
f(z) = -\frac{1}{\displaystyle
z-q_1-\frac{\rho_1^2}{\displaystyle 
z-q_2-\frac{\rho_2^2}{\displaystyle
z-q_3-\frac{\rho_3^2}{\displaystyle
\dots}}}}.
\end{equation*}
We will consider a case when the measure $\mes$ is discrete
\begin{equation*}
\mes = \sum_{k=1}^\infty \mu_k \delta_{t_k}, \qquad t_k \in \R, \, \mu_k>0.
\end{equation*}

It is known that Jacobi matrices correspond to canonical systems, which, in turn, can be described by two types of Herglotz functions $f(z)=\dfrac{C(z)}{A(z)}$ and $-\dfrac{B(z)}{A(z)}$ (entire functions $A, B, C$ are monodromy matrix entries, see Section \ref{SectCanonSyst} for the details). 
The measure corresponding to the first function is denoted by $\mu$, while the measure corresponding to the second one turns out to be measure of the form $\nu=\sum_{k=1}^\infty \nu_k \delta_{r_k}$. Hence,


\begin{equation}
\label{FTHBmeasure}
-\frac{B(z)}{A(z)}=p+\sum_{k=1}^{\infty}\data_k\left(\frac{1}{r_k-z}-\frac{r_k}{r_k^2+1}\right), \qquad p\geq 0.
\end{equation}

Note that $t_k=r_k$ are the zeros of the function $A(z)$, but for us it will be more convenient to denote them differently.

The measure $\nu$ contains important information about the corresponding de Branges space.

\subsubsection*{De Branges spaces}
We remind the basic notions of de Branges theory.

\begin{defn} We say that an entire function $E(z)$ is of Hermite--Biehler class, if  $|E(z)|>|E(\bar{z})|$ for all $z\in \Compl^+$.
\end{defn}
In particular, any Hermite-Biehler function does not have zeros in the upper half-plane.  

It can be shown that function $E(z) = A(z)+iB(z)$ is Hermite-Biehler. Note that since functions $A(z)$ and $B(z)$ are real-valued for $z\in\R$, they can be recovered from $E(z)$ by the following formulae

\begin{equation*}
A(z) = \frac{1}{2}(E(z)+\overline{E(\overline{z})}), \qquad B(z) = \frac{1}{2i}(E(z)-\overline{E(\overline{z})}).
\end{equation*}


It is well-known that any Hermite--Biehler function generates a Hilbert space of entire functions (de Branges space).

\begin{defn} The de Branges space $\mathcal{H}(E)$ corresponding to an Hermite--Biehler function $E(z)$ is the space of entire functions $F(z)$ such that both functions $F(z)/E(z)$ and $\overline{F(\overline{z})}/E(z)$ are in Hardy class $H_2(\Compl^+)$. The scalar product is defined by 
$$\left\langle F, G\right\rangle_{\mathcal{H}(E)}=\frac{1}{\pi}\int_{\R}F(t)\overline{G(t)}\frac{dt}{|E(t)|^2}.$$ 
\end{defn}

For example, if $E(z) = e^{-i\pi z}$, then the corresponding de Branges space is just the Paley-Wiener space.

\medskip

The connection between properties of the spectral measure $\nu$ and the corresponding de Branges spaces has attracted recent attention.
De Branges spaces corresponding to the lacunary sequence $\{r_k\}$  ($r_{k+1}>\lambda r_k$ for some $\lambda>1$) are well studied \cite{FockSp, DiscrHilb}, in particular one can describe Bessel sequences and Riesz bases of reproducing kernels for such spaces.
%
%
%
%

\label{IntroFTSpaces}


We will study small de Branges spaces with lacunary spectral data $\nu=\nu_{A,B}=\sum_k \nu_k\delta_{r_k}$ satisfying the following condition

\begin{equation} \label{1.2}
\sum_{k<n}\data_k + r_n^2 \sum_{k>n}\frac{\data_k}{r_k^2} \leq C\data_n.
\end{equation}

This class is a natural one, e.g. it was shown in \cite[Theorem 1.2]{SpectrSynth} that the de Branges space is of radial Fock-type space if and only if $\{r_k\}$ is lacunary and \eqref{1.2} holds.
On the other hand, this class appears in spectral synthesis problem.

\medskip

Inequality \eqref{1.2} is equivalent to the lacunarity of sequences $\left\{\data_k\right\}$, $\left\{\frac{\data_k}{r_k^2}\right\}$. 
From now on we assume that $\data_{k+1} > \low \data_k$, $\frac{\data_{k+1}}{r_{k+1}^2}<\theta\frac{\data_k}{r_k^2}$ for some $\low>1$ and $\theta < 1$.

\medskip

\begin{defn}
We say that measure $\mes = \sum_{k} \mu_k \delta_{t_k}$ is completely lacunary if the following hold
\begin{enumerate}[(i)]
\begin{item}
$t_{k+1}>\lambda t_k$,
\end{item}
\begin{item}
$\frac{\mu_{k+1}}{t_{k+1}} >\low \frac{\mu_k}{t_k}$,
\end{item}
\begin{item}
$\frac{\mu_{k+1}}{t_{k+1}^2} <\theta \frac{\mu_k}{t_k^2}$.
\end{item}
\end{enumerate}
\end{defn}

\subsection{Main results.}
Now we are ready to state main results of the paper.

Given sufficiently large $\lambda$, $\low$ and $\theta^{-1}$, we are able to find asymptotics of the corresponding Jacobi matrix entries.
\begin{thr}
\label{FTTheorem}
Let $\nu_k$ and $r_k$ be defined by \eqref{FTHBmeasure}.
Suppose that $r_{k+1}>\lambda r_k$, $\nu_{k+1}>\low \nu_k$ and $\frac{\nu_{k+1}}{r_{k+1}^2}<\theta \frac{\nu_k}{r_k^2},$ for some $\lambda>10^6,\, \low>10^6,\, \theta<10^{-6}$. Then
\begin{enumerate}[(i)]
\begin{item}
 $\left(1-\dfrac{100}{\low}\right)\left( r_{n} +\dfrac{\nu_n}{\nu_{n+1}}r_{n+1}\right)
<q_n
<\left(1+\dfrac{100}{\low}\right) \left(r_n+\dfrac{\nu_n}{\nu_{n+1}}r_{n+1}\right)$;
\end{item}
\begin{item} $\left(1-\dfrac{1000}{\low}-\dfrac{1000}{\lambda}\right)\dfrac{\nu_{n}}{\nu_{n+1}}r_{n+1}^2
<\rho_n^2
<\left(1+\dfrac{1000}{\low}+\dfrac{1000}{\lambda}\right)\dfrac{\nu_{n}}{\nu_{n+1}} r_{n+1}^2.$
\end{item}
\end{enumerate}
\end{thr}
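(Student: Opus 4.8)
The plan is to reduce everything to the spectral measure $\mu=\sum_k\mu_k\delta_{r_k}$ of $J$ (the measure of $f=C/A$), whose coefficients $q_n,\rho_n$ are recovered by the classical Hankel-determinant formulas, and then to exploit the super-geometric decay of $\mu_k$ forced by lacunarity. First I would reconstruct the Hermite--Biehler data from $\{\nu_k,r_k\}$: since the $r_k$ grow at least geometrically, $\sum 1/r_k<\infty$, so $A$ is (up to normalization) the genus-zero canonical product $A(z)=\prod_k(1-z/r_k)$, and $B$ is read off from \eqref{FTHBmeasure}. Using the monodromy relation $AD-BC\equiv 1$, which at a zero $r_k$ of $A$ gives $C(r_k)=-1/B(r_k)$, together with the residue identity $B(r_k)=\nu_kA'(r_k)$, one obtains
$$\mu_k=\frac{|C(r_k)|}{|A'(r_k)|}=\frac{1}{\nu_k\,A'(r_k)^2}.$$
The value $A'(r_k)=-r_k^{-1}\prod_{j\neq k}(1-r_k/r_j)$ is then estimated by splitting the product at $j=k$: the tail $\prod_{j>k}(1-r_k/r_j)=1+O(1/\lambda)$, while $\prod_{j<k}(1-r_k/r_j)=(-1)^{k-1}r_k^{\,k-1}\big(\prod_{j<k}r_j\big)^{-1}(1+O(1/\lambda))$, so that $|A'(r_k)|\asymp r_k^{\,k-2}\big/\prod_{j<k}r_j$. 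The outcome I want from this step is the single ratio
$$\frac{\mu_{k+1}}{\mu_k}=\frac{\nu_k}{\nu_{k+1}}\Big(\frac{r_k}{r_{k+1}}\Big)^{2k-2}\big(1+O(1/\lambda)\big),$$
which shows that $\mu_k$ decays super-geometrically and, in particular, that all moments $s_j=\sum_k\mu_k r_k^{\,j}$ converge.

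Next I would express $q_n,\rho_n$ through the Hankel determinants $D_n=\det(s_{i+j})_{0\le i,j\le n-1}$: one has $\rho_n^2=D_{n+1}D_{n-1}/D_n^2$ and $q_n=\hat D_n/D_n-\hat D_{n-1}/D_{n-1}$, where $\hat D_n$ is the Hankel determinant with its last moment-column bumped by one index. The key identity is the Cauchy--Binet expansion for a discrete measure,
$$D_n=\sum_{k_1<\dots<k_n}\Big(\prod_{i=1}^n\mu_{k_i}\Big)\prod_{1\le i<j\le n}(r_{k_j}-r_{k_i})^2,$$
together with the analogous formula for $\hat D_n$ carrying the extra factor $\sum_i r_{k_i}$ inside the sum.

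The heart of the argument is a dominant-term analysis of these sums. Comparing the subset $S=\{1,\dots,n\}$ with any competitor, the cheapest modification replaces $n$ by $n+1$, and by the ratio from the first step its relative weight is exactly
$$\frac{T_{\{1,\dots,n-1,n+1\}}}{T_{\{1,\dots,n\}}}=\frac{\mu_{n+1}}{\mu_n}\Big(\frac{r_{n+1}}{r_n}\Big)^{2n-2}\big(1+O(1/\lambda)\big)=\frac{\nu_n}{\nu_{n+1}}\big(1+O(1/\lambda)\big)<\tfrac{2}{\low},$$
the crucial cancellation of the $(r_n/r_{n+1})^{2n-2}$ factor being precisely why $\nu$ is the natural variable. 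Hence $\{1,\dots,n\}$ dominates and $D_n=\prod_{k\le n}\mu_k\prod_{1\le i<j\le n}(r_j-r_i)^2\,(1+O(1/\low))$. Substituting into $\rho_n^2=(D_{n+1}/D_n)/(D_n/D_{n-1})$ and using $(r_{n+1}-r_i)/r_{n+1}=1+O(1/\lambda)$ gives $\rho_n^2=\frac{\nu_n}{\nu_{n+1}}r_{n+1}^2\,(1+O(1/\low)+O(1/\lambda))$; the bumped determinants give $\hat D_n/D_n=(r_1+\dots+r_n)+\frac{\nu_n}{\nu_{n+1}}r_{n+1}(1+o(1))$, and subtracting the $(n-1)$ version yields $q_n=r_n+\frac{\nu_n}{\nu_{n+1}}r_{n+1}-\frac{\nu_{n-1}}{\nu_n}r_n(1+o(1))$, where the last term is $O(r_n/\low)$ and is absorbed into the stated relative error.

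The hard part is purely quantitative: turning these $O(\cdot)$ heuristics into the sharp multiplicative constants $1\pm 100/\low$ and $1\pm(1000/\low+1000/\lambda)$. This requires (a) a uniform bound on the total contribution of all non-dominant subsets $S\neq\{1,\dots,n\}$, obtained by organizing them according to the number and size of their ``excursions'' above $\{1,\dots,n\}$ and summing the resulting geometric series in $1/\low$ and $1/\lambda$; and (b) careful tracking of the $O(1/\lambda)$ factors coming from the tails $\prod_{j>k}(1-r_k/r_j)$ and from the replacements $r_{n+1}-r_i\approx r_{n+1}$. A secondary point to settle is that the rapid growth of $\rho_n$ makes the underlying moment problem potentially indeterminate, so ``the'' Jacobi matrix must be the one singled out by the canonical-system correspondence of Section~\ref{SectCanonSyst} rather than by determinacy of $\mu$ alone; this is what guarantees that the weights $\mu_k$ computed above are indeed the spectral data of $J$. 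I expect (a) to be the main obstacle, since it is where the precise numerical constants in the theorem are won or lost.
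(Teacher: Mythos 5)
Your proposal is correct in substance but follows a genuinely different route from the paper's. The common ground is only the starting point: your formula $\mu_k=1/(\nu_k A'(r_k)^2)$, derived from $AD-BC\equiv 1$ and the residue identities, is exactly the paper's \eqref{FTspd}, and your ratio $\mu_{k+1}/\mu_k=\frac{\nu_k}{\nu_{k+1}}(r_k/r_{k+1})^{2k-2}\left(1+O(1/\lambda)\right)$ is the paper's \eqref{FTmainIneq}. From there the paper never touches Hankel determinants: it analyses the Stieltjes (coefficient-stripping) algorithm one step at a time, using the change of variables $\tau_k=r_k^{-1}$, $\alpha_k=\spd_k r_k^{-1}$ to import the root-localization machinery of Section \ref{ProofLemmas} (Statements \ref{approx}, \ref{lacun}, Lemma \ref{bound}, inequality \eqref{slowgrowth}), and shows in Lemmas \ref{FTlowstepweak}, \ref{FTupstep} and Corollary \ref{FTmainCrl} that bounds of the form \eqref{FTmainIneq} reproduce themselves (with explicitly degraded constants) after every step; $q_n$ and $\rho_n^2$ are then the first moment and variance of the $n$-th stripped measure via \eqref{FTJacobiEntries}. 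You instead read $q_n,\rho_n$ off the original data in closed form through the Heine/Cauchy--Binet expansion and a dominant-subset analysis, with no induction over steps. That analysis is sound: writing $d(S)=\sum_j(k_j-j)$ for $S=\{k_1<\dots<k_n\}$, each unit of displacement costs a factor at most roughly $2/\low$, the number of subsets of total displacement $d$ is at most the partition number $p(d)$, so all competitors together contribute $O(1/\low)$ relative to $\{1,\dots,n\}$, and your two leading-subset ratios produce exactly the quantities $r_n+\frac{\nu_n}{\nu_{n+1}}r_{n+1}$ and $\frac{\nu_n}{\nu_{n+1}}r_{n+1}^2$ (the cancellation of $(r_{n+1}/r_n)^{2n-2}$ you point out is indeed the crux, and is the same cancellation driving \eqref{FTmainIneq}). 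What the paper's route buys is uniform control of the spectral measure under coefficient stripping — a statement of independent interest — while recycling the machinery already built for Theorems \ref{ThrJacobi}, \ref{ThrRevearsed}; what your route buys is a non-inductive argument in which the answer is visible immediately and the error accumulation is handled once, combinatorially, rather than propagated through an induction. One concrete warning for your quantitative step: claim (i) carries only a $100/\low$ error, with no $1/\lambda$ term, whereas your sketch as written would give a relative error $O(1/\low)+O(1/\lambda)$, and $1/\lambda$ is \emph{not} dominated by $1/\low$ under the hypotheses (one may have $\low\gg\lambda$). The rescue is that every $1+O(1/\lambda)$ factor attached to the term $\frac{\nu_n}{\nu_{n+1}}r_{n+1}$ is in fact $1+O(r_n/r_{n+1})$, and $\frac{\nu_n}{\nu_{n+1}}r_{n+1}\cdot\frac{r_n}{r_{n+1}}=\frac{\nu_n}{\nu_{n+1}}r_n\le r_n/\low$, so these corrections are absorbed into the $\low$-error of the $r_n$ term rather than surviving as a $1/\lambda$ relative error; for (ii) no such conversion exists, which is exactly why the paper's bound there, and yours, must include the $1000/\lambda$ term.
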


We do not know if the result holds for arbitrary constants $\lambda$, $\low$ and $\theta^{-1}$ bigger than one.

\medskip

We also study the case when the lacunarity conditions are imposed on the measure $\mu$ instead of $\nu$. We assume the lacunarity of sequences $\{t_k\}$, $\left\{\frac{\mu_k}{t_k}\right\}$ and $\left\{\frac{\mu_k}{t^2_k}\right\}$ with parameters $\lambda$, $\low$ and $\theta$ respectively.

\medskip

Next we will consider finite dimensional case, i.e. when $\mes = \sum_{k=1}^N \mu_k \delta_{t_k}$. 

If $\lambda, \,\low$ and $\theta^{-1}$ are big enough, then we can solve inverse spectral problem and find asymptotics of Jacobi matrix entries $\{q_n\}$ and $\{\rho_n\}$ (Theorems \ref{ThrJacobi}, \ref{ThrRevearsed}). 

\begin{thr}
\label{ThrJacobi}
Let $\mu$ be a completely lacunary measure with big lacunarity parameters $\lambda>1000$,
$\low>20$, $\frac{10}{\lambda} < \theta < \frac{1}{100}$.
Then
\begin{enumerate}[(i)]
\item $\left(1-\lambda^{-1}\right) t_{N-n+1} < q_n < \left(1+3\low^{-1}\right) t_{N-n+1}$;
\item  $\frac{1}{10} \theta^{-1}t_{N-n}^2 < \rho^2_n
<10\low^{-1}t_{N-n+1}t_{N-n}$.
\end{enumerate}
\end{thr}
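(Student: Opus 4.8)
The plan is to route the spectral data through the classical determinantal (Heine) formulas for orthogonal polynomials and then let the extreme lacunarity collapse every sum to a single term. Write $s_j=\sum_k\mu_k t_k^j$ for the moments and $D_n=\det\big(s_{i+j}\big)_{i,j=0}^{n-1}$ for the Hankel determinants ($D_0=1$), and recall the standard identities $\rho_n^2=D_{n+1}D_{n-1}/D_n^2$ and $q_n=\operatorname{tr}J_n-\operatorname{tr}J_{n-1}$, where $J_n$ is the top left $n\times n$ block and $\operatorname{tr}J_n=\sum_{k=1}^n q_k$ is the sum of the roots of the $n$-th orthogonal polynomial. For the discrete measure $\mu$ the Heine--Andr\'eief identity expresses these determinants as sums over $n$-element index sets $S\subseteq\{1,\dots,N\}$: putting
\[
w_S=\prod_{k\in S}\mu_k\!\!\prod_{\substack{k,l\in S\\ k<l}}\!\!(t_l-t_k)^2\ge 0,
\]
one has $D_n=\sum_{|S|=n}w_S$, while $\operatorname{tr}J_n$ is the $w$-weighted average of the subset sums, $\operatorname{tr}J_n=\big(\sum_{|S|=n}(\sum_{k\in S}t_k)\,w_S\big)/\big(\sum_{|S|=n}w_S\big)$.

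Since both the masses $\mu_k$ and the gaps $(t_l-t_k)^2$ increase with the indices, the weight $w_S$ is largest on the top block $S^\ast=\{N-n+1,\dots,N\}$; this is exactly what produces the order reversal $t_{N-n+1}$ in the statement. The core of the proof is a \emph{concentration estimate} $\sum_{S\ne S^\ast}w_S\le\delta_n\,w_{S^\ast}$ with a small $\delta_n$. I would prove it by a lifting argument: any $S\ne S^\ast$ reaches $S^\ast$ by successively replacing an index $k\in S$ with the next larger unused index, and a single such replacement multiplies $w_S$ by $\tfrac{\mu_{k+1}}{\mu_k}$ times a product of Vandermonde ratios. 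The mass factor is $\tfrac{\mu_{k+1}}{\mu_k}=\tfrac{\mu_{k+1}/t_{k+1}}{\mu_k/t_k}\cdot\tfrac{t_{k+1}}{t_k}>\low\lambda$, whereas the Vandermonde ratios stay within a bounded factor of $1$ because the other nodes are $\lambda$-separated; hence each move gains a factor $\gtrsim\low\lambda\gg1$, and grouping the competitors by the number of moves needed bounds $\sum_{S\ne S^\ast}w_S$ by a convergent geometric series, giving a tiny $\delta_n$.

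Granting concentration, $\rho_n^2$ is immediate. The mass parts telescope, $\tfrac{M_{n+1}M_{n-1}}{M_n^2}=\tfrac{\mu_{N-n}}{\mu_{N-n+1}}$ with $M_n=\prod_{k\in S^\ast}\mu_k$, and, using $t_l-t_k=t_l(1+O(\lambda^{-1}))$ for $l>k$, the Vandermonde parts telescope to $t_{N-n+1}^2(1+O(\lambda^{-1}))$, so that
\[
\rho_n^2=\big(1+O(\delta_n+\lambda^{-1})\big)\,\frac{\mu_{N-n}}{\mu_{N-n+1}}\,t_{N-n+1}^2.
\]
Writing $\tfrac{\mu_{N-n}}{\mu_{N-n+1}}t_{N-n+1}^2$ once as $\tfrac{\mu_{N-n}/t_{N-n}^2}{\mu_{N-n+1}/t_{N-n+1}^2}\,t_{N-n}^2$ and once as $\tfrac{\mu_{N-n}/t_{N-n}}{\mu_{N-n+1}/t_{N-n+1}}\,t_{N-n}t_{N-n+1}$, and then invoking conditions (iii) and (ii) of complete lacunarity, gives the lower bound $\rho_n^2>\tfrac1{10}\theta^{-1}t_{N-n}^2$ and the upper bound $\rho_n^2<10\low^{-1}t_{N-n+1}t_{N-n}$ once the error factor is absorbed into the constants.

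The delicate point --- and the step I expect to be the main obstacle --- is $q_n$, because it is a difference of two nearly equal traces and a crude relative error is fatal. Put $T_m=\sum_{k=N-m+1}^N t_k$; concentration yields $0\le E_m:=T_m-\operatorname{tr}J_m\le\delta_m T_m$, whence $q_n=(T_n-T_{n-1})-(E_n-E_{n-1})=t_{N-n+1}-E_n+E_{n-1}$. Since $T_{n-1}\approx t_N\gg t_{N-n+1}$, the bound $E_{n-1}\le\delta_{n-1}T_{n-1}$ is useless and one must estimate $E_n,E_{n-1}$ \emph{directly} as weighted averages of the subset-sum deficits $T_m-\sum_{k\in S}t_k$. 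The leading competitor for $E_{n-1}$ replaces $N-n+2$ by $N-n+1$, contributing a deficit $t_{N-n+2}-t_{N-n+1}$ against relative weight $\approx\tfrac{\mu_{N-n+1}}{\mu_{N-n+2}}$; here the potentially large factor $t_{N-n+2}$ is killed precisely by condition (ii), since $\tfrac{\mu_{N-n+1}}{\mu_{N-n+2}}(t_{N-n+2}-t_{N-n+1})\le\tfrac{\mu_{N-n+1}/t_{N-n+1}}{\mu_{N-n+2}/t_{N-n+2}}\,t_{N-n+1}<\low^{-1}t_{N-n+1}$. Summing the geometric tail gives $E_{n-1}<3\low^{-1}t_{N-n+1}$, while the analogous computation for $E_n$ gains an extra factor $\lambda^{-1}$ and yields $E_n<\lambda^{-1}t_{N-n+1}$. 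Together these produce $(1-\lambda^{-1})t_{N-n+1}<q_n<(1+3\low^{-1})t_{N-n+1}$. Throughout, the real work is not any single inequality but keeping the geometric tails explicitly summable so that the stated constants $\tfrac1{10},10,\lambda^{-1},3\low^{-1}$ actually come out.
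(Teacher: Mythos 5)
Your proposal is sound, but it takes a genuinely different route from the paper. The paper never touches Hankel determinants: it expresses $q_n$ and $\rho_n^2$ via \eqref{coeffFormulas} as the mean and variance of the spectral measure $\mu^{(n)}$ of the truncated matrix $J^{(n)}$, and then proves by induction along the Stieltjes algorithm that every $\mu^{(n)}$ remains (almost) completely lacunary with essentially unmoved poles — this is the content of Lemmas \ref{mainlemma} and \ref{lemmaPoleBound}, which rest on the one-step analysis of Statements \ref{approx}, \ref{lacun}, \ref{lowstep}, \ref{upstep} and Lemma \ref{bound}. You replace that induction by closed formulas: Heine's identity $D_n=\sum_{|S|=n}w_S$, the subset-average representation of $\operatorname{tr}J_n$, and concentration on the top block $S^{\ast}$. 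Your key technical observation is correct and essential: in the exchange argument the Vandermonde losses per move are $1-O(\lambda^{-1})$ (a geometric series over the $\lambda$-separated nodes), not $\left(1-\lambda^{-1}\right)^{n}$, so each move gains $\gtrsim \varkappa\lambda$ and the competitors, counted by total displacement, sum to a tiny $\delta_n$. Your treatment of $q_n$ is also the right one: you correctly recognize that relative-error concentration is fatal for the trace difference, and your deficit bound, where condition (ii) kills the large factor via $\frac{\mu_{N-n+1}}{\mu_{N-n+2}}\left(t_{N-n+2}-t_{N-n+1}\right)<\varkappa^{-1}t_{N-n+1}$, is exactly the cancellation that the paper packages into Lemma \ref{lemmaPoleBound} ($t_k<t_k^{(n)}<(1+3\varkappa^{-1})t_k$). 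As for what each approach buys: yours is non-inductive and self-contained for this theorem, gives $q_n,\rho_n^2$ directly in terms of the original data, and avoids tracking how lacunarity constants degrade along the algorithm (the $5\theta$ in Lemma \ref{mainlemma}, the $(1+\eps_n)^3$ factors in Statement \ref{upstep}); the paper's route is heavier but produces reusable infrastructure — the same one-step statements are recycled in Section \ref{SectProofFT} to prove Theorem \ref{FTTheorem} for infinite matrices, where determinant formulas do not apply as directly. To turn your sketch into a complete proof you still need to write out the concentration lemma (admissible exchange paths, partition-function count of competitors) and the weighted deficit sums for $E_n$, $E_{n-1}$ with explicit constants, but under $\lambda>1000$, $\varkappa>20$ all your estimates close with large margins, so the stated constants are attainable.
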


The inverse theorem (for the direct spectral problem) also holds up to some constants.
\begin{thr}
\label{ThrRevearsed}
Let $\lambda>1000$, $\frac{10}{\lambda} < \theta < \frac{1}{1000}$ and $\low > 100.$ Let also
\begin{enumerate}[(i)]
\item $q_{n}> 3\lambda q_{n+1}$;
\item $20 \theta^{-1} q_{n+1}^2 
< \rho_n^2 
<\frac{1}{20} \low^{-1} q_{n}q_{n+1}$.
\end{enumerate}
Then 
\begin{enumerate}[(i)]
\item $\left(1-\low^{-1}\right) q_{N-n+1} < t_n < \left(1+\lambda^{-1}\right) q_{N-n+1}$,
\item $\mu$ is a completely lacunary measure.
\end{enumerate}
\end{thr}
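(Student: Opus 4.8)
The plan is to reduce the statement to two quantitative facts about the finite Jacobi matrix $J=J_N$: a localization of its eigenvalues $t_1<\dots<t_N$ near the rapidly decreasing diagonal entries $q_m$, and an estimate of the spectral weights $\mu_k$ as a product of the ratios $\rho_i/q_i$. Throughout I denote by $p_0,p_1,\dots$ the orthonormal polynomials of $\mu$, normalized by $p_0\equiv 1$ and $\rho_j p_j(z)=(z-q_j)p_{j-1}(z)-\rho_{j-1}p_{j-2}(z)$, and use that the eigenvalues of $J$ are exactly the zeros of $p_N$, that the unnormalized eigenvector at $t_k$ is $(p_0(t_k),\dots,p_{N-1}(t_k))$, and hence that $\mu_k=\bigl(\sum_{j=0}^{N-1}p_j(t_k)^2\bigr)^{-1}$. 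For the localization I would show that $J$ has exactly one eigenvalue in each interval $I_m=\bigl((1-\low^{-1})q_m,\,(1+\lambda^{-1})q_m\bigr)$; these are pairwise disjoint by $q_m>3\lambda q_{m+1}$, so the $N$ of them account for all eigenvalues and identify $t_{N-m+1}$ as the one in $I_m$, which is assertion (i) after the change of index $n=N-m+1$. Heuristically this is second-order perturbation theory: the level $q_m$ is pushed up by $\rho_m^2/(q_m-q_{m+1})$ through its coupling to the smaller level $q_{m+1}$ and down by $\rho_{m-1}^2/(q_{m-1}-q_m)$ through its coupling to $q_{m-1}$; hypothesis (ii) bounds the down-shift by $q_m/\low$ and the up-shift by $q_{m+1}/\low\ll q_m/\lambda$, which are precisely the two one-sided tolerances in (i). Rigorously I would run the three-term recursion as a Sturm sequence: tracking the signs of $p_0(z),\dots,p_N(z)$ at the endpoints of $I_m$ and controlling the continued-fraction tails $p_N/p_{N-1}$, which stay of order $q_m$ for indices above $m$ since $\rho_j^2/q_m\ll q_m$, one sees the eigenvalue count increase by exactly one across each $I_m$.

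Next I would establish the weight estimate $\mu_k=\prod_{i=1}^{N-k}(\rho_i/q_i)^2\cdot(1+o(1))$, the error being a bounded multiplicative factor. The mechanism is a turning point for the recursion at the site $m=N-k+1$: for $z=t_k\approx q_m$ and indices $j<m$ one has $z-q_j\approx-q_j$, so $|p_j/p_{j-1}|\approx q_j/\rho_j\gg 1$ and $|p_j(t_k)|$ grows geometrically up to $j=m-1$; for $j\ge m$ one has $z-q_j\approx q_m$, and the solution selected by $p_N(t_k)=0$ is the decaying mode $|p_j/p_{j-1}|\approx\rho_{j-1}/q_m<1$, so $|p_j(t_k)|$ decays geometrically. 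Consequently the normalization sum $\sum_{j=0}^{N-1}p_j(t_k)^2$ is dominated by its single largest term $p_{m-1}(t_k)^2\approx\prod_{i=1}^{m-1}(q_i/\rho_i)^2$, the two geometric tails contributing only a bounded factor, and inverting gives the claimed product.

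With $t_k\approx q_{N-k+1}$ and $\mu_k\approx\prod_{i=1}^{N-k}(\rho_i/q_i)^2$ in hand, the complete lacunarity conditions (i)--(iii) become direct inequalities on $q,\rho$. Writing $m=N-k+1$, one has $\mu_{k+1}/\mu_k\approx(q_{m-1}/\rho_{m-1})^2$ and $t_{k+1}/t_k\approx q_{m-1}/q_m$; hence condition (i) is just $q_{m-1}>\lambda q_m$, condition (ii) reduces to $\rho_{m-1}^2<\low^{-1}q_{m-1}q_m$, and condition (iii) reduces to $\rho_{m-1}^2>\theta^{-1}q_m^2$. All three are implied, with a safety factor of $20$, by the two-sided hypothesis $20\theta^{-1}q_m^2<\rho_{m-1}^2<\tfrac{1}{20}\low^{-1}q_{m-1}q_m$; this factor $20$ is exactly what absorbs the multiplicative errors produced in the first two steps, so that the parameters come out as stated.

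I expect the main obstacle to be the second step: the matching across the turning point $j\approx m$ that pins down $\mu_k$ up to a factor close to one, together with the bookkeeping that keeps every accumulated relative error strictly inside the margins built into the constants. The eigenvalue localization is comparatively standard, but the precise control of the normalization sum, and the verification that the neglected geometric tails sum to a constant near one, is where the sizes of $3\lambda$, $\low$ and $\theta^{-1}$ are genuinely consumed.
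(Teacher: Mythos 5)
Your architecture is genuinely different from the paper's: the paper never analyzes the full $N\times N$ matrix at once, but instead runs an induction on the reversed Stieltjes algorithm, adjoining one diagonal entry at a time via $-1/f^{(n)}=z-q_n+\rho_n^2f^{(n+1)}$ and tracking how poles and residues move at each step through the identities \eqref{rootReversed} (root localization, lower bound, upper bound in Section~\ref{ProofThrReversed}). Your plan --- Sturm-sequence eigenvalue counting plus a turning-point analysis of the eigenvector $(p_0(t_k),\dots,p_{N-1}(t_k))$ --- is a legitimate alternative skeleton; your localization step is fine in outline, and your final arithmetic (reducing the three lacunarity conditions to the two-sided hypothesis on $\rho_{m-1}^2$ with factor-$20$ margins) is consistent with the statement.

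However, the central claim of your second step is false as stated, and this is a genuine gap. Write $m=N-k+1$. For $2\le i\le m-1$ the three-term recursion gives, at $z=t_k$,
\begin{equation*}
\left|\frac{p_i(t_k)}{p_{i-1}(t_k)}\right|
=\frac{q_i}{\rho_i}\left(1-\frac{t_k}{q_i}-\frac{\rho_{i-1}^2}{q_iq_{i-1}}\bigl(1+O(\low^{-1}+\lambda^{-1})\bigr)\right),
\end{equation*}
and the hypotheses control $\frac{\rho_{i-1}^2}{q_iq_{i-1}}$ only by the bound $\frac{1}{20}\low^{-1}$: nothing forces these corrections to decay in $i$, and they all have the same sign. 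If, say, $\rho_{i-1}^2=\frac{0.99}{20}\low^{-1}q_{i-1}q_i$ for every $i$ (admissible once the ratios $q_{i-1}/q_i$ are taken large enough), then $p_{m-1}(t_k)^2$ falls below $\prod_{i<m}(q_i/\rho_i)^2$ by a factor of order $e^{-(N-k)/(10\low)}$, so $\mu_k\approx p_{m-1}(t_k)^{-2}$ exceeds your product formula by a factor that is unbounded as $N-k\to\infty$. Since the conclusion (complete lacunarity with the \emph{same} $\lambda,\low,\theta$) must hold uniformly in $N$, no fixed safety factor absorbs this, and your third step becomes unsound: from an absolute formula with multiplicative error $E_k$ you can only control $\mu_{k+1}/\mu_k$ up to $E_{k+1}/E_k$, which you have not bounded. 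The repair is to prove the ratio estimate directly, $\frac{\mu_{k+1}}{\mu_k}=\left(\frac{q_{m-1}}{\rho_{m-1}}\right)^2\bigl(1+O(\low^{-1}+\lambda^{-1})\bigr)$: in the quotient $p_{m-1}(t_k)^2/p_{m-2}(t_{k+1})^2$ the $z$-independent corrections $\frac{\rho_{i-1}^2}{q_iq_{i-1}}$ cancel between the common factors, and only the differences $\frac{t_{k+1}-t_k}{q_i}\lesssim(3\lambda)^{-(m-1-i)}$, which sum geometrically, survive. This cancellation-of-systematic-errors bookkeeping is exactly what the paper's induction is engineered to perform: its per-step degradation factors $(1+\eps_n)$ carry $\eps_n$ geometric in $n-N$ (for instance $\eps_n=\low^{\,n-N+1}$ in the lower bound, $\eps_n=6\lambda^{n-N+1}+2\low^{n-N+1}$ in the upper bound), precisely so that the product of all degradations over the whole induction stays bounded. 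With your step 2 replaced by such a directly proved ratio estimate, your approach would go through as a genuine alternative to the paper's proof.
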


If we omit the lacunarity condition for $\left\{\frac{\mu_n}{t_n}\right\}$, then it seems that the analysis is more complicated, see Section \ref{CounterEx}.

Both Theorems \ref{ThrJacobi} and \ref{ThrRevearsed} can be rewritten in terms of corresponding Hamiltonians (canonical systems), see Section \ref{SectCanonSyst}.

\subsection{Canonical Systems}
\label{SectCanonSyst}
A canonical system is a differential equation of the form
\begin{equation*}
\begin{pmatrix}
0 & -1\\
1 & 0
\end{pmatrix} \frac{d}{dx}Y(x, z) = z\mathsf{H}(x)Y(x, z), \qquad x\in (0, L),
\end{equation*}
where $\mathsf{H}$ is a locally summable $2\times 2$ matrix-valued function on $(0, L)$ such that $\mathsf{H}(x)\geq 0$ a.e. Function $\mathsf{H}$ is called \textit{Hamiltonian} of the system. Changing the variable we can assume that $\mathrm{tr} \, \mathsf{H}(x) = 1$ a.e.
 
Interval $I$ is called an \textit{indivisible} interval, if there exists $e\in \R^2$ such that for any $x \in I$: 
\begin{equation}\label{indivDef}
\mathsf{H}(x) = \langle\cdot, e \rangle e 
\end{equation}
and there is no larger interval $I'$ such that \eqref{indivDef} holds for a.e. $x \in I'$.

It is known that there is a correspondence between Jacobi matrices and canonical systems consisting only of indivisible intervals (see, for example, \cite{RRom, RRomJM}).

Let $\mathsf{H}$ be a Hamiltonian which consists of finite number of indivisible intervals. Let $\mathsf{H}=\left\langle\cdot, e_k\right\rangle e_k$ on the $k$-th interval. Denote its length by $l_k$. Then $J$ is the corresponding Jacobi matrix if and only if
\begin{align}
\label{JacCanonDiag}
q_j &= \frac{1}{l_j}\left(
\frac{\left\langle e_j, e_{j+1}\right\rangle}{\left\langle e^{\bot}_j, e_{j+1}\right\rangle}+
\frac{\left\langle e_{j-1}, e_{j}\right\rangle}{\left\langle e^{\bot}_{j-1}, e_{j}\right\rangle}
\right), \qquad j\geq 2;\\
\label{JacCanonDiagFirst}
q_1 &= \frac{1}{l_1}\left(
\frac{\langle e_1, e_2\rangle}{\langle e_1^{\bot}, e_2\rangle}-\frac{e_1^{-}}{e_1^{+}} 
\right);\\
\label{JacCanonNonDiag}
\rho_j &= -\frac{1}{\sqrt{l_{j+1}l_j}\left\langle e_j^{\bot}, e_{j+1}\right\rangle}, \qquad j\geq 1.
\end{align}

Now let $M(x,z)= \begin{pmatrix}
A_x(z) & C_x(z)\\
B_x(z) & D_x(z)
\end{pmatrix}$ be the monodromy matrix of the canonical system, in other words,
\begin{equation*}
M(0,z) = \begin{pmatrix}
1 & 0\\
0 & 1
\end{pmatrix}, 
\qquad
\begin{pmatrix}
0 & -1\\
1 & 0
\end{pmatrix} \frac{d}{dx}M(x, z) = z\mathsf{H}(x)M(x, z), \qquad x\in (0, L).
\end{equation*}
It is easy to see that all functions $A_x(z), B_x(z), C_x(z), D_x(z)$ are real-valued when $z\in \R$.  
The corresponding Hermite--Biehler function is given by $E(z) = A_L(z)+iB_L(z)$. The spectral data of the corresponding Jacobi matrix is given by $\dfrac{C_L(z)}{A_L(z)}$, and the spectral data of the corresponding de Branges space is given by $-\dfrac{B_L(z)}{A_L(z)}$.

Denote $\scprod_k = -\langle e^{\bot}_{k}, e_{k+1}\rangle$.
Then Theorem \ref{ThrJacobi} can be rewritten in terms of canonical systems.
\begin{thr}
\label{ThrCanon}
Let $\mu$ be a completely lacunary measure with big lacunarity parameters 
$\lambda>1000$, 
$\low>100$,
$\frac{10}{\lambda}<\theta < \frac{1}{100}$.
We take $l_1=1000 q_1^{-1}$ and 
$e_1 = \begin{pmatrix}
1\\
0\\
\end{pmatrix}$.
Then for the corresponding canonical system we have $\frac{1}{1001}< \scprod_1 <\frac{1}{1000}$,
\begin{equation*}
\dfrac{\delta_n}{\delta_{n+1}}>\dfrac{1}{10}\low, \qquad \dfrac{l_{n+1}\scprod_{n+1}}{l_n\scprod_n}>\dfrac{1}{10}\lambda, \qquad \dfrac{l_{n+1}\scprod_{n+1}^2}{l_n\scprod_n^2}>\dfrac{1}{1000}\theta^{-1}.
\end{equation*}
\end{thr}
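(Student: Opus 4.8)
The plan is to read off Theorem \ref{ThrCanon} from Theorem \ref{ThrJacobi} by inverting the dictionary \eqref{JacCanonDiag}--\eqref{JacCanonNonDiag}. The hypotheses here are stronger than those of Theorem \ref{ThrJacobi} (we require $\low>100$ in place of $\low>20$, the ranges for $\lambda$ and $\theta$ being the same), so that theorem applies and yields the two-sided bounds $q_n\asymp t_{N-n+1}$ together with $\frac1{10}\theta^{-1}t_{N-n}^2<\rho_n^2<10\low^{-1}t_{N-n+1}t_{N-n}$. Parametrizing the unit vectors as $e_k=(\cos\alpha_k,\sin\alpha_k)$ and letting $\beta_k\in(0,\tfrac\pi2)$ be the turning angle between $e_k$ and $e_{k+1}$, one computes $\frac{\langle e_k,e_{k+1}\rangle}{\langle e_k^{\bot},e_{k+1}\rangle}=\cot\beta_k$ and $\scprod_k=\sin\beta_k$ (with the orientation of $e_k^{\bot}$ chosen so that all quantities are positive). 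Then \eqref{JacCanonDiag} and \eqref{JacCanonNonDiag} become the exact recurrences
\[ q_jl_j=\cot\beta_j+\cot\beta_{j-1},\qquad l_jl_{j+1}\scprod_j^2\rho_j^2=1, \]
which, together with the normalization $e_1=(1,0)^{\mathsf T}$ and $l_1=1000\,q_1^{-1}$, determine $\{l_k\}$ and $\{\scprod_k\}$ by forward recursion.

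For the base case, $e_1=(1,0)^{\mathsf T}$ kills the term $e_1^-/e_1^+$ in \eqref{JacCanonDiagFirst}, so $q_1l_1=\cot\beta_1=1000$, whence $\scprod_1=\sin\beta_1=(1+1000^2)^{-1/2}$, which lies strictly between $\frac1{1001}$ and $\frac1{1000}$. This is the first assertion.

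The heart of the matter is the claim $l_n\scprod_n=(1+\epsilon_n)q_n^{-1}$ with $\epsilon_n$ uniformly small, which I would prove by induction on $n$. Granting it below $n$, the second recurrence gives $l_n/l_{n-1}\approx q_{n-1}^2/\rho_{n-1}^2$; substituting into the first recurrence, the subdominant term $\cot\beta_{n-1}\approx\scprod_{n-1}^{-1}\approx q_{n-1}l_{n-1}$ is a fraction $\approx\rho_{n-1}^2/(q_{n-1}q_n)$ of $q_nl_n$, and by the upper bound on $\rho^2$ and the lacunarity of $\{t_k\}$ this fraction is below $10\low^{-1}<\frac1{10}$. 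Hence $\cot\beta_n\approx q_nl_n$; since $\beta_n$ is then small, $\cos\beta_n\approx1$ and $\scprod_n=\sin\beta_n\approx(\cot\beta_n)^{-1}\approx q_n^{-1}l_n^{-1}$, which closes the induction. The delicate point, and the main obstacle, is precisely this bootstrap: the relation $\scprod_{n-1}\gg\scprod_n$ is what makes $\cot\beta_{n-1}$ negligible in the recurrence, while that negligibility is in turn what produces $\scprod_{n-1}\gg\scprod_n$. One must organize the induction so that the multiplicative errors $\epsilon_n$ do not accumulate with $n$, and keep the constants explicit enough to stay within the stated tolerances.

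Once $l_n\scprod_n\approx q_n^{-1}$ and $l_{n+1}/l_n\approx q_n^2/\rho_n^2$ are available, the three ratios follow by substitution:
\[ \frac{l_{n+1}\scprod_{n+1}}{l_n\scprod_n}\approx\frac{q_n}{q_{n+1}},\qquad \frac{\scprod_n}{\scprod_{n+1}}\approx\frac{q_nq_{n+1}}{\rho_n^2},\qquad \frac{l_{n+1}\scprod_{n+1}^2}{l_n\scprod_n^2}\approx\frac{\rho_n^2}{q_{n+1}^2}. \]
Inserting $q_n\asymp t_{N-n+1}$ and the bounds on $\rho_n^2$ from Theorem \ref{ThrJacobi}, and using $t_{N-n+1}>\lambda t_{N-n}$, the first ratio exceeds $\frac1{10}\lambda$, the second exceeds $\frac1{10}\low$, and the third exceeds $\frac1{1000}\theta^{-1}$, as required. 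The deliberately lossy constants in the statement are exactly what absorb the approximation errors $\epsilon_n$.
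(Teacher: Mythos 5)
Your proposal is correct and takes essentially the same route as the paper: the paper's proof also fixes $\scprod_1$ from \eqref{JacCanonDiagFirst}, then inducts on $n$ maintaining $\frac{1}{\scprod_n}-1<q_nl_n<\frac{2}{\scprod_n}$ (your $l_n\scprod_n=(1+\epsilon_n)q_n^{-1}$ with a fixed, non-accumulating tolerance), derives $\frac{q_n^2}{2\rho_n^2}<\frac{l_{n+1}}{l_n}<\frac{q_n^2}{(1-\scprod_n)^2\rho_n^2}$ and $\frac{q_nq_{n+1}}{3\rho_n^2}<\frac{\scprod_n}{\scprod_{n+1}}<\frac{3q_nq_{n+1}}{\rho_n^2}$ from \eqref{JacCanonDiag}--\eqref{JacCanonNonDiag}, and concludes by substituting the bounds on $q_n,\rho_n^2$ from Theorem \ref{ThrJacobi}. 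Your angle parametrization is just a rewriting of the paper's $\sqrt{1-\scprod^2}/\scprod$ expressions, and the bootstrap you flag as the delicate point is exactly what the paper's factor-$2$ induction hypothesis handles, so there is no genuine gap.
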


Theorem \ref{ThrRevearsed} can also be rewritten in the following way.
\begin{thr}
\label{ThrRevearsedCanon}
Let $e_1 = \begin{pmatrix}
1\\
0\\
\end{pmatrix}$, $\frac{1}{1001}< \scprod_1 <\frac{1}{1000}$ and
\begin{equation*}
\dfrac{\delta_n}{\delta_{n+1}}>100\low, \qquad \dfrac{l_{n+1}\scprod_{n+1}}{l_n\scprod_n}>100\lambda, \qquad \dfrac{l_{n+1}\scprod_{n+1}^2}{l_n\scprod_n^2}>100\theta^{-1}.
\end{equation*}
for some
$\lambda>1000$, $\low>100$, $\frac{10}{\lambda}<\theta<\frac{1}{100}$ and any $1\leq n\leq N-1$. 
Then the corresponding spectral data $\mu$ is a completely lacunary measure. 
\end{thr}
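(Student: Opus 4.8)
The plan is to reduce Theorem \ref{ThrRevearsedCanon} to Theorem \ref{ThrRevearsed} by translating the hypotheses on the Hamiltonian data $\{l_k, \scprod_k\}$ into hypotheses on the Jacobi coefficients $\{q_n, \rho_n\}$ via the dictionary \eqref{JacCanonDiag}--\eqref{JacCanonNonDiag}. The two facts I would extract from that dictionary are the exact identity $\rho_n^2 = (l_n l_{n+1}\scprod_n^2)^{-1}$ and the asymptotic identity $q_n = (1+\eps_n)(l_n\scprod_n)^{-1}$ with $|\eps_n|$ uniformly tiny; once these are in hand, the rest is bookkeeping.

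First I would record the elementary consequences of the three inequalities. Since $\scprod_1 > 0$ and $\scprod_n/\scprod_{n+1} > 100\low$, every $\scprod_n$ is positive (hence every $\rho_n$ is positive and $J$ is a genuine Jacobi matrix) and the sequence decays geometrically, $\scprod_{n+1} < (100\low)^{-1}\scprod_n$. As each $e_k$ is a unit vector, the quantity $c_k := \langle e_k, e_{k+1}\rangle$ satisfies the exact relation $c_k^2 + \scprod_k^2 = 1$, so $|c_k| = \sqrt{1-\scprod_k^2} = 1 + O(\scprod_k^2)$. The normalization $e_1 = (1,0)^{\mathsf T}$ kills the boundary term in \eqref{JacCanonDiagFirst}, giving $q_1 = -c_1/(l_1\scprod_1)$; together with $\scprod_1 \in (\tfrac{1}{1001}, \tfrac{1}{1000})$ and the standard sign convention of the correspondence (which, with $\rho_n > 0$, forces $c_k = -\sqrt{1-\scprod_k^2}$), this yields $q_n > 0$ and, substituting into \eqref{JacCanonDiag},
\[
q_n = \frac{1}{l_n\scprod_n}\left(|c_n| + \frac{\scprod_n}{\scprod_{n-1}}|c_{n-1}|\right) = \frac{1+\eps_n}{l_n\scprod_n}, \qquad |\eps_n| = O\!\left((100\low)^{-1} + \scprod_n^2\right),
\]
uniformly in $n$ (the case $n=1$ being the boundary computation above).

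It then remains to check the hypotheses of Theorem \ref{ThrRevearsed}. Forming the three ratios that enter those hypotheses and inserting $\rho_n^2 = (l_nl_{n+1}\scprod_n^2)^{-1}$ gives
\begin{gather*}
\frac{q_n}{q_{n+1}} = \frac{l_{n+1}\scprod_{n+1}}{l_n\scprod_n}\cdot\frac{1+\eps_n}{1+\eps_{n+1}}, \qquad \frac{\rho_n^2}{q_{n+1}^2} = \frac{l_{n+1}\scprod_{n+1}^2}{l_n\scprod_n^2}\,(1+\eps_{n+1})^2,\\
\frac{\rho_n^2}{q_nq_{n+1}} = \frac{\scprod_{n+1}}{\scprod_n}\cdot\frac{1}{(1+\eps_n)(1+\eps_{n+1})}.
\end{gather*}
Feeding in $\frac{l_{n+1}\scprod_{n+1}}{l_n\scprod_n} > 100\lambda$, $\frac{\scprod_{n+1}}{\scprod_n} < (100\low)^{-1}$ and $\frac{l_{n+1}\scprod_{n+1}^2}{l_n\scprod_n^2} > 100\theta^{-1}$, and absorbing the factors $1+\eps$ into the generous numerical gaps, one obtains $q_n > 3\lambda q_{n+1}$ and $20\theta^{-1}q_{n+1}^2 < \rho_n^2 < \tfrac{1}{20}\low^{-1}q_nq_{n+1}$, which are precisely hypotheses (i), (ii) of Theorem \ref{ThrRevearsed}. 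Since the spectral measure produced by that theorem is exactly the $\mu$ attached to $C_L/A_L$ of the present canonical system, Theorem \ref{ThrRevearsed} gives that $\mu$ is completely lacunary.

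I expect the main obstacle to lie not in the ratio arithmetic but in the two more delicate points feeding into it: the uniform sign determination $c_k = -\sqrt{1-\scprod_k^2}$ (so that $q_n > 0$ and no cancellation occurs in \eqref{JacCanonDiag}), and the verification that the passage from the factor-$100$ canonical thresholds to the factor-$3$ and factor-$20$ Jacobi thresholds really survives the multiplicative errors $1 + \eps_n$, including a check that the admissible range of $\theta$ is compatible with the one demanded by Theorem \ref{ThrRevearsed}.
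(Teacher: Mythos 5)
Your route is the paper's route: the paper's own (two-line) proof consists of re-deriving inequalities \eqref{lenIneq} and \eqref{ProofCanSyst2} and feeding them into Theorem \ref{ThrRevearsed}, and those two inequalities are precisely the combination of your exact identity $\rho_n^2 = (l_nl_{n+1}\scprod_n^2)^{-1}$ with your asymptotic identity $q_n = (1+\eps_n)(l_n\scprod_n)^{-1}$. Your ratio bookkeeping (extracting $q_n > 3\lambda q_{n+1}$ and $20\theta^{-1}q_{n+1}^2 < \rho_n^2 < \tfrac{1}{20}\low^{-1}q_nq_{n+1}$ from the factor-$100$ hypotheses, with the $1+\eps_n$ factors absorbed into the numerical slack) is exactly the intended deduction, written out in more detail than the paper gives.

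Two caveats, one of which is a genuine error in your write-up. The sign claim is false as stated: $\rho_n > 0$ fixes the sign of $\langle e_n^{\bot}, e_{n+1}\rangle$ (equivalently $\scprod_n > 0$), but it places no constraint whatsoever on $c_n = \langle e_n, e_{n+1}\rangle$. The only quantity entering \eqref{JacCanonDiag} is the ratio $\langle e_n, e_{n+1}\rangle / \langle e_n^{\bot}, e_{n+1}\rangle$, which is invariant under replacing any $e_j$ by $-e_j$, so no choice of sign representatives can determine it; a Hamiltonian with $c_n = +\sqrt{1-\scprod_n^2}$ satisfies every stated hypothesis of Theorem \ref{ThrRevearsedCanon}, yet gives $q_n < 0$ and kills the reduction. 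This positivity is a tacit extra hypothesis: in Theorem \ref{ThrCanon} it comes for free because the system is constructed from a Jacobi matrix with $q_n > 0$, whereas here it must simply be imposed (the paper assumes it silently; you should state it as a convention on the Hamiltonian rather than claim to derive it from $\rho_n > 0$). Second, the $\theta$-window issue you flag at the end is real and is not repairable by bookkeeping alone: Theorem \ref{ThrRevearsed} is stated for $\theta < \frac{1}{1000}$, while the present theorem allows $\theta < \frac{1}{100}$; this inconsistency of constants sits in the paper itself, and closing it requires either shrinking the admissible window here or rerunning the estimates of Section \ref{ProofThrReversed}.
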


\noindent {\bf Organization of the paper and notations.} Theorem \ref{ThrJacobi} is deduced from Lemmas \ref{mainlemma} and \ref{lemmaPoleBound} in Section \ref{ProofThrJacobi}. Lemmas \ref{mainlemma} and \ref{lemmaPoleBound} are proved in Section \ref{ProofLemmas}.
Theorem \ref{ThrRevearsed} is proved in Section \ref{ProofThrReversed}.
Theorems \ref{ThrCanon} and \ref{ThrRevearsedCanon} are proved in Section \ref{ProofCanonTheorems}. Theorem \ref{FTTheorem} is proved in Section \ref{SectProofFT}.

We prove our results by direct analysis of each step of Stieltjes algorithm.

\medskip

Throughout this paper we write $\mu^{(l)}$ and $t^{(l)}$ to refer to the spectral data on the step $l$ of the Stieltjes algorithm.
We write $f \sim g$ when $f = O(g)$ and $g = O(f)$. 

\section{Proof of Theorem \ref{ThrJacobi}}
\label{ProofThrJacobi}
Consider a finite Jacobi matrix
\begin{equation}
J=\begin{pmatrix}
  	q_1 		& \rho_1		&\cdots 	& 0\\
  	\rho_1 	& q_2 		&\cdots 	& 0\\
  	\vdots  & \vdots  	& \ddots & \vdots\\
  	0		& 0			& \cdots	 & q_N\\
 	\end{pmatrix}, \qquad q_k\in\R, \, \rho_k>0.
\end{equation}

Then the corresponding Herglotz function $f(z)$ is a rational function of the form
\begin{equation}
\label{HerglotzDef}
f(z) = \sum_{k=1}^N \frac{\mu_k}{t_k-z}, \qquad \mu_k>0, \, \sum_{k=1}^{N}\mu_k=1, \quad t_k \in \R.
\end{equation}
Denote $J^{(n)}$ the matrix obtained by deleting $n-1$ first rows and columns from $J$:
\begin{equation}
J^{(n)}=\begin{pmatrix}
  	q_n & \rho_n & 0  &\cdots & 0\\
  	\rho_n & q_{n+1} & \rho_{n+1} &\cdots & 0\\
  	\vdots & \vdots & \vdots & \ddots & \vdots\\
  	0	& 0		 & 0		& \cdots	 & q_N\\
 	\end{pmatrix}.
\end{equation}
Let $f^{(n)}(z)$
be the corresponding functions. Let also
\begin{equation}
f^{(n)}(z) = \sum_{k=1}^{N-n+1} \frac{\mu^{(n)}_k}{t^{(n)}_k-z}, \qquad \mu^{(n)}_k>0, \, \sum_{k=1}^{N-n+1}\mu_k^{(n)}=1, \quad t^{(n)}_k \in \R.
\end{equation}
It is known that 
\begin{equation}
\label{recurrenceFormula}
-\frac{1}{f^{(n)}(z)} = z - q_n + \rho^2_n f^{(n+1)}(z).
\end{equation}
Comparing the asymptotics we get
\begin{equation}
\label{coeffFormulas}
q_n = \sum_{k=1}^{N-n+1}\mu_k^{(n)}t^{(n)}_k, \qquad \rho^2_n = \sum_{k=1}^{N-n+1}\mu_k^{(n)}\left(t^{(n)}_k\right)^2 - \left(\sum_{k=1}^{N-n+1}\mu_k^{(n)}t^{(n)}_k \right)^2.
\end{equation}

In order to prove Theorem \ref{ThrJacobi} we will need the following Lemmas.
\begin{lmm}
\label{mainlemma}
Let $\mu$ be a completely lacunary measure with big lacunarity parameters
$\lambda>1000$, 
$\low>10$,
$\frac{10}{\lambda}<\theta < \frac{1}{100}$.
Then for any $k\leq N-n-1$ we have
\begin{equation*}
\lambda<\dfrac{t_{k+1}^{(n)}}{t_k^{(n)}}, \qquad \dfrac{\mu_{k+1}}{\mu_k}<\dfrac{\mu_{k+1}^{(n)}}{\mu_k^{(n)}}, \qquad \dfrac{\mu_{k+1}^{(n)}}{\left(t_{k+1}^{(n)}\right)^2}<5\theta \dfrac{\mu_{k}^{(n)}}{\left(t_{k}^{(n)}\right)^2}.
\end{equation*}
\end{lmm}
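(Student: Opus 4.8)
The plan is to argue by induction on $n$, analysing a single step of the Stieltjes algorithm \eqref{recurrenceFormula}. The base case $n=1$ is exactly the hypothesis that $\mu$ is completely lacunary (for the middle inequality it is an equality at $n=1$, the strict form appearing once $n\geq 2$). For the inductive step I would exploit two structural facts that follow directly from \eqref{recurrenceFormula}: first, the poles $t_k^{(n+1)}$ of $f^{(n+1)}$ are precisely the zeros of $f^{(n)}$, and since $f^{(n)}$ is strictly increasing between consecutive poles they interlace, $t_k^{(n+1)}\in\left(t_k^{(n)},t_{k+1}^{(n)}\right)$; second, comparing residues in \eqref{recurrenceFormula} gives the weight-update formula
\begin{equation*}
\mu_k^{(n+1)}=\frac{1}{\rho_n^2\,(f^{(n)})'\!\left(t_k^{(n+1)}\right)},\qquad (f^{(n)})'(z)=\sum_{j}\frac{\mu_j^{(n)}}{\left(t_j^{(n)}-z\right)^2}.
\end{equation*}
Thus the entire step reduces to locating the zeros of $f^{(n)}$ and estimating $(f^{(n)})'$ there.

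The main computation is to pin down each zero $\zeta_k:=t_k^{(n+1)}$. Writing $f^{(n)}(\zeta_k)=0$ and splitting the sum at the index $k$, the term $j=k$ is the only negative term that can be large, while the positive side is controlled by the tail $\sum_{j>k}\mu_j^{(n)}/(t_j^{(n)}-\zeta_k)$. Using the inductive lacunarity — in particular that $\{\mu_j^{(n)}/t_j^{(n)}\}$ grows geometrically, which follows by combining the three inductive inequalities — the positive side and the inner tail $j<k$ are summed as geometric series, and the balance equation localises $\zeta_k$ very close to $t_k^{(n)}$: the excess $\zeta_k-t_k^{(n)}$ is of order $\mu_k^{(n)}/T_k$ with $T_k:=\sum_{j>k}\mu_j^{(n)}/t_j^{(n)}$, a quantity small compared with $t_k^{(n)}$ thanks to the gap forced by the hypotheses. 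This smallness yields the first assertion at once: $t_k^{(n+1)}$ exceeds $t_k^{(n)}$ only by a controlled relative amount while $t_{k+1}^{(n+1)}>t_{k+1}^{(n)}$, so $t_{k+1}^{(n+1)}/t_k^{(n+1)}$ stays above $\lambda$, the slack coming from the improved gap $\big(t_{k+1}^{(n)}/t_k^{(n)}\big)^2>\low\lambda/(5\theta)$ that the three inequalities guarantee.

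For the weight assertions I would feed the localisation back into the update formula. Near $\zeta_k$ the derivative $(f^{(n)})'$ is dominated by its $j=k$ term, so $(f^{(n)})'(\zeta_k)\sim T_k^2/\mu_k^{(n)}$ and hence $\mu_k^{(n+1)}\sim \mu_k^{(n)}/(\rho_n^2 T_k^2)$. The decisive point is that the factor $1/\rho_n^2$ is independent of $k$ and cancels in every ratio of consecutive weights. For the second assertion, $\mu_{k+1}^{(n+1)}/\mu_k^{(n+1)}\sim (\mu_{k+1}^{(n)}/\mu_k^{(n)})\,(T_k/T_{k+1})^2$, and $T_k>T_{k+1}$ makes this strictly larger than $\mu_{k+1}^{(n)}/\mu_k^{(n)}$; since the weight ratios thus increase at every step, chaining down to $n=1$ gives $\mu_{k+1}^{(n)}/\mu_k^{(n)}>\mu_{k+1}/\mu_k$. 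For the third, the same cancellation gives
\begin{equation*}
\frac{\mu_{k+1}^{(n+1)}/\big(t_{k+1}^{(n+1)}\big)^2}{\mu_k^{(n+1)}/\big(t_k^{(n+1)}\big)^2}
\approx
\frac{\mu_{k+1}^{(n)}/\big(t_{k+1}^{(n)}\big)^2}{\mu_k^{(n)}/\big(t_k^{(n)}\big)^2},
\end{equation*}
so the $\mu/t^2$-lacunarity is essentially reproduced at each step, and the constant $5$ is the buffer absorbing the multiplicative errors from the tail estimates.

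The hard part is precisely the bookkeeping of these multiplicative errors. One must show that the approximations ``$\sim$'' above hold with errors controlled by the absolute buffers (the $5$ in the third inequality, and the $100/\low$- and $1/\lambda$-type factors used elsewhere), and, crucially, that over the up to $N$ successive steps these errors do not compound into an $N$-dependent loss. This is what forces the restriction $k\le N-n-1$, since for the last one or two indices the tail $T_k$ degenerates and boundary effects appear; and it is why the cancellation of the $k$-independent factor $1/\rho_n^2$ is essential, as only ratios of comparable quantities then enter, so the per-step distortion is governed by the geometric gaps and stays uniformly bounded rather than accumulating. Establishing this uniform control for all admissible $k$ simultaneously, with the stated numerical constants, is the real content of the lemma; the interlacing and residue formulae are the easy scaffolding.
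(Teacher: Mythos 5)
Your scaffolding---induction on the Stieltjes step, the root identity \eqref{root}, the residue formula $\mu_k^{(n+1)}=\bigl(\rho_n^2\,(f^{(n)})'(t_k^{(n+1)})\bigr)^{-1}$, localization of each zero just above the corresponding pole, and cancellation of the $k$-independent factor $\rho_n^2$ in weight ratios---is exactly the paper's (Statement \ref{approx} and Statements \ref{lowstep}, \ref{upstep}). But there is a genuine gap at both places where the constants must be preserved exactly. First, preservation of $\lambda$: your argument gives only $t_{k+1}^{(n+1)}/t_k^{(n+1)}>\lambda/(1+\epsilon_k)$, where $\epsilon_k$ is the relative displacement of $t_k^{(n)}$; that is strictly below $\lambda$, and iterated over up to $N$ steps it degrades the lacunarity constant by an uncontrolled factor. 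The slack you invoke is illusory: the inductive hypotheses do give $\bigl(t_{k+1}^{(n)}/t_k^{(n)}\bigr)^2>\low\lambda/(5\theta)$, but with $\theta$ near its allowed maximum $1/100$ and $\low$ near $10$ this is about $200\lambda$, and $\sqrt{200\lambda}<\lambda$ for every admissible $\lambda>1000$---so in this part of the parameter range the ``improved gap'' is weaker than the bound $\lambda$ you already have and absorbs nothing. The paper preserves $\lambda$ exactly by a different device (Statement \ref{lacun}): the increments themselves are shown to be $\lambda$-lacunary, $s_{n+1}-t_{n+1}>\frac{\mu_{n+1}(\lambda-1)}{\mu_n\lambda}(s_n-t_n)>\lambda(s_n-t_n)$, whence $s_{n+1}=t_{n+1}+(s_{n+1}-t_{n+1})>\lambda t_n+\lambda(s_n-t_n)=\lambda s_n$, with no loss at all.

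Second, the non-compounding of errors: you correctly name this as ``the real content of the lemma'' but then defer it, offering only that the per-step distortion is ``uniformly bounded''---which is insufficient, since a fixed per-step loss still compounds over $N$ steps. The paper's actual mechanism, which the proposal never identifies, is geometric decay of the per-step error in the distance to the top index: comparing $\mu_n^{(p)}/t_n^{(p)}$ with the top weight $\mu_{N-p}^{(p)}/t_{N-p}^{(p)}$ (not the adjacent one) yields
\begin{equation*}
\frac{t_n^{(p+1)}-t_n^{(p)}}{t_n^{(p)}}<\frac{1}{(\hat{\low}-1)^{N-p-n}}
\end{equation*}
(inequality \eqref{slowgrowth}), so the total displacement summed over all steps is $O(1/\hat{\low})$ (Lemma \ref{bound}); likewise the buffers $\eps_n=5(\low-1)^{n-N}$ in Statement \ref{upstep} form a convergent product, which is precisely where the constant $5$ in the third inequality comes from. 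Without this decay---and without rigorous versions of your heuristics $\mu_k^{(n+1)}\sim\mu_k^{(n)}/(\rho_n^2T_k^2)$ carrying such explicit buffers, as in Statements \ref{lowstep} and \ref{upstep}---the proposal remains a plan rather than a proof.
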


\begin{lmm}
\label{lemmaPoleBound} Let $\mu$ be a completely lacunary measure with big lacunarity parameters $\lambda>1000$, $\low>10$,
$\frac{10}{\lambda}<\theta < \frac{1}{100}$. Then
2e have $t_k < t^{(n)}_k <\left(1+3\low^{-1}\right) t_k$ for $1<n\leq N$.
\end{lmm}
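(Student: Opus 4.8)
The plan is to analyse the Stieltjes algorithm one step at a time, tracking the motion of each pole. The starting observation is that the poles of $f^{(n+1)}$ are exactly the zeros of $f^{(n)}$: by \eqref{recurrenceFormula} a pole of $f^{(n+1)}$ forces a pole of $-1/f^{(n)}$, i.e.\ a zero of $f^{(n)}$. Since $f^{(n)}(z)=\sum_k \mu_k^{(n)}/(t_k^{(n)}-z)$ is a Herglotz rational function, it increases strictly between consecutive poles and has exactly one simple real zero in each gap $\bigl(t_k^{(n)},t_{k+1}^{(n)}\bigr)$; labelling these zeros in increasing order identifies them with the poles of $f^{(n+1)}$, so that $t_k^{(n)}<t_k^{(n+1)}<t_{k+1}^{(n)}$ for every admissible $k$. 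Thus $t_k^{(n)}$ is strictly increasing in $n$, and since $t_k^{(1)}=t_k$ this already gives the lower bound $t_k<t_k^{(n)}$ for $1<n\le N$.

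For the upper bound I would estimate a single step quantitatively. Writing $s=t_k^{(n+1)}$ and isolating the $k$-th term of $f^{(n)}(s)=0$ yields $t_k^{(n+1)}-t_k^{(n)}=\mu_k^{(n)}/R_n$, where $R_n=\sum_{j\ne k}\mu_j^{(n)}/\bigl(t_j^{(n)}-s\bigr)$. The terms with $j>k$ are positive and the terms with $j<k$ negative. Using the lacunarity preserved by Lemma \ref{mainlemma} --- the gaps $t_{j+1}^{(n)}/t_j^{(n)}>\lambda$ together with the rapid growth of the masses $\mu_j^{(n)}$ --- the negative part is of order $\mu_{k-1}^{(n)}/t_k^{(n)}$ and is negligible against the positive part, which is bounded below by $\sum_{j>k}\mu_j^{(n)}/t_j^{(n)}$ (since $t_j^{(n)}-s\le t_j^{(n)}$ there). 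Consequently the relative one-step displacement satisfies $\bigl(t_k^{(n+1)}-t_k^{(n)}\bigr)/t_k^{(n)}\lesssim \bigl(\mu_k^{(n)}/t_k^{(n)}\bigr)\big/\sum_{j>k}\mu_j^{(n)}/t_j^{(n)}$.

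The crux, and the step I expect to be the main obstacle, is to sum these displacements over all $n$ without losing the factor $\low^{-1}$, since a crude bound of order $\low^{-1}$ per step would accumulate to order $N\low^{-1}$. The mechanism that saves the estimate is that the weights $\mu_j^{(n)}/t_j^{(n)}$ grow geometrically in $j$ with ratio comparable to $\low$: by Lemma \ref{mainlemma} and the inductive upper bound one has $\mu_{k+1}^{(n)}/\mu_k^{(n)}>\mu_{k+1}/\mu_k>\low\,t_{k+1}/t_k$ while $t_{k+1}^{(n)}/t_k^{(n)}<(1+3\low^{-1})t_{k+1}/t_k$, so the ratio of successive weights exceeds $\low/(1+3\low^{-1})$. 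Hence the denominator $\sum_{j>k}\mu_j^{(n)}/t_j^{(n)}$ is dominated by its last term and exceeds $\mu_k^{(n)}/t_k^{(n)}$ by a factor exponential in the number $(N-n+1)-k$ of poles still lying to the right of index $k$. The relative displacement at step $n$ is therefore of order $\low^{-(N-n+1-k)}$, and as $n$ runs from $1$ to $N-k$ these exponents run through $N-k,\dots,1$, so the total relative drift is bounded by a geometric series $\sum_{p\ge 1}\low^{-p}<3\low^{-1}$ for $\low$ large, giving $t_k^{(n)}<(1+3\low^{-1})t_k$.

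Because this upper bound is used (through the weight-growth estimate above) to run the very step that produces it, in practice I would prove Lemmas \ref{mainlemma} and \ref{lemmaPoleBound} simultaneously by induction on $n$, carrying the bound $t_k<t_k^{(n)}<(1+3\low^{-1})t_k$ and the three lacunarity estimates of Lemma \ref{mainlemma} together. The constant $3$ is then forced by the geometric sum, and the remaining work --- verifying the negligibility of the left-hand terms in $R_n$ and pinning down the numerical constants in the weight ratios --- is routine but somewhat delicate.
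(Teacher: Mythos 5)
Your proposal is correct and follows essentially the same route as the paper: your one-step displacement estimate is the paper's Statement \ref{approx}, your exponential-in-$(N-n+1-k)$ bound on the relative drift plus the geometric summation over steps is exactly the paper's Lemma \ref{bound} (inequality \eqref{slowgrowth}), and your simultaneous induction carrying the weight-ratio growth is the paper's use of Statement \ref{lowstep} to close the loop. The only cosmetic difference is that you bound the positive part of the denominator by the full sum $\sum_{j>k}\mu_j^{(n)}/t_j^{(n)}$ while the paper keeps only its largest term, which changes nothing.
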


Now we will deduce Theorem \ref{ThrJacobi} from Lemmas \ref{mainlemma} and \ref{lemmaPoleBound}. 
\begin{proof}[Proof of the Theorem \ref{ThrJacobi}]

We use \eqref{coeffFormulas} and Lemma \ref{lemmaPoleBound}. Estimates of $q_n$ are obviously given by the main term $\mu_{N-n+1}^{(n)}t^{(n)}_{N-n+1}$. Hence, the required inequalities hold.
Now we estimate $\rho_n$. By Cauchy inequality we have
$\sum_{k=1}^{N-n} \mu_k^{(n)}\left(t_k^{(n)}\right)^2>\left(\sum_{k=1}^{N-n} \mu_k^{(n)}t_k^{(n)}\right)^2$.

Hence,
\begin{multline*}
\rho_n^2>\mu_{N-n+1}^{(n)}\left(t^{(n)}_{N-n+1}\right)^2 -\left(\mu_{N-n+1}^{(n)}t^{(n)}_{N-n+1}\right)^2 - 2\mu_{N-n+1}^{(n)}t^{(n)}_{N-n+1} \left(\sum_{k=1}^{N-n} \mu_k^{(n)}t_k^{(n)}\right)\\
> \mu_{N-n+1}^{(n)}\mu_{N-n}^{(n)}\left(t^{(n)}_{N-n+1}\right)^2(1-3\lambda^{-1})
>\frac{1}{10} \theta^{-1}t_{N-n}^2.
\end{multline*}
On the other hand,
\begin{equation*}
\rho_n^2 < \mu_{N-n+1}^{(n)}(1-\mu_{N-n+1}^{(n)})\left(t^{(n)}_{N-n+1}\right)^2+\sum_{k=1}^{N-n} \mu_k^{(n)}\left(t_k^{(n)}\right)^2<
10 \low^{-1} t_{N-n+1}t_{N-n}.
\end{equation*}
\end{proof}

\section{Proof of Lemmas \ref{mainlemma} and \ref{lemmaPoleBound}} 
\label{ProofLemmas}
We will study one step in the recurrence formula \eqref{recurrenceFormula}.
Suppose
\begin{equation}
\label{oneStepStieltjes}
-\left(\sum_{k=1}^N \frac{\mu_k}{t_k-z}\right)^{-1}=z-b+\sum_{k=1}^{N-1}\frac{w_k}{s_k-z}.
\end{equation}
Our main goal is to establish some connections between $\{\mu_k, t_k\}$ and $\{w_k, s_k\}$.
Clearly
\begin{equation}
\smm_{k=1}^N \dfrac{\mu_k}{t_k-s_n}=0 \label{root}; \qquad
\smm_{k=1}^N \dfrac{\mu_k}{(t_k-s_n)^2}=\frac{1}{w_n}. 
\end{equation}

Let
\begin{equation}
\label{Msum}
M^{(l)}_n = \sum_{k=1}^{n-l+1} \mu_k^{(l)}, \qquad M_n = M^{(0)}_n.
\end{equation}
We will assume that $\frac{\mu_{k+1}}{\mu_k} > \low \frac{t_{k+1}}{t_k}$ and $\frac{\mu_{k+1}}{t_{k+1}^2} < \theta \frac{\mu_{k}}{t_k^2}$ for some $\low > 10$ and 
some $\theta<1$.

\subsection{Root localization}
Now we estimate $s_n$.
\begin{stm}
\label{approx}
The following inequalities hold
$$\mu_n \left(\dfrac{\mu_n}{t_{n+1}-t_n}+\smm_{k=n+1}^N \dfrac{\mu_k}{t_k-t_n}\right)^{-1}<s_n-t_n<\mu_n \left(\smm_{k=n+1}^N \frac{\mu_k}{t_k-t_n}- \frac{M_{n-1}}{t_n-t_{n-1}}\right)^{-1}.$$
\end{stm}
\begin{proof}
From \eqref{root} it follows that

$$\frac{\mu_n}{s_n-t_n}>\smm_{k=n+1}^N \frac{\mu_k}{t_k-t_n}-\smm_{l=1}^{n-1} \frac{\mu_l}{t_n-t_l}.$$
This gives the right inequality.
%
On the other hand, from \eqref{root} we obtain
$$\frac{\mu_n(t_{n+1}-s_n)}{s_n-t_n}
<\mu_{n+1}+\smm_{k=n+2}^N \frac{\mu_k(t_{n+1}-t_n)}{t_k-t_n}.$$
So, we have
$$\mu_n\frac{t_{n+1}-t_n}{s_n-t_n}<\mu_n+\mu_{n+1}+\smm_{k=n+2}^N \frac{\mu_k(t_{n+1}-t_n)}{t_k-t_n}.$$
This gives the left inequality.
\end{proof}

%

Now we prove that, if on the first $K$ steps of the Stieltjes algorithm the weights grow fast enough, then even after $K+1$ step the poles will not change too much (compared to the initial data).

\begin{lmm}
\label{bound}
If $\dfrac{\mu_{n+1}^{(k)}}{\mu_n^{(k)}}>\hat{\low}\dfrac{t^{(k)}_{n+1}}{t^{(k)}_n}$ for $0\leq k \leq K$ and some $\hat{\low}>5$, then $\dfrac{t_n^{(K+1)}}{t_n^{(0)}}<1+\dfrac{2}{\hat{\low}}$.
\end{lmm}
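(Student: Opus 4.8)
The plan is to track a single pole through the first $K+1$ steps of the Stieltjes algorithm and to show that its \emph{total} relative displacement is a convergent geometric series, dominated by the last few steps. Fix $n$ and let $N_k$ denote the number of poles at step $k$, so $N_{k+1}=N_k-1$. By the interlacing built into \eqref{oneStepStieltjes}, at each step the new $n$-th pole lies strictly between the old $n$-th and $(n+1)$-th poles; hence $t_n^{(k)}$ increases in $k$, and there are exactly $N_k-n$ poles above it at step $k$. Since pole $n$ survives up to step $K+1$, we have $N_k-n\ge N_K-n\ge 1$ for all $0\le k\le K$, so all the sums written below are non-empty.

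The crucial technical point is a one-step bound that, unlike the right-hand inequality of Statement~\ref{approx}, requires no lower bound on the gaps between poles (the hypotheses of the lemma say nothing about pole lacunarity). Writing the root equation \eqref{root} at step $k$ for $s_n=t_n^{(k+1)}$ and separating the indices above and below $n$ gives
\[
\sum_{j>n}\frac{\mu_j^{(k)}}{t_j^{(k)}-s_n}=\sum_{j\le n}\frac{\mu_j^{(k)}}{s_n-t_j^{(k)}}.
\]
For $j\le n$ one has $s_n-t_j^{(k)}\ge s_n-t_n^{(k)}$, so the right-hand sum is at most $M_n^{(k)}\big/(s_n-t_n^{(k)})$; for $j>n$ one has $t_j^{(k)}-s_n<t_j^{(k)}$, so the left-hand sum exceeds $\sum_{j>n}\mu_j^{(k)}/t_j^{(k)}$. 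Combining the two yields the gap-free estimate
\[
t_n^{(k+1)}-t_n^{(k)}<\frac{M_n^{(k)}}{\sum_{j>n}\mu_j^{(k)}/t_j^{(k)}}.
\]
Bounding the lower sum by $M_n^{(k)}/(s_n-t_n^{(k)})$ is the trick that makes the whole argument independent of any separation of the poles; this is the step I expect to be the main obstacle, since every naive estimate (including Statement~\ref{approx}) brings in the uncontrolled backward gap $t_n-t_{n-1}$.

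Now I feed in the hypothesis. For each $k\le K$ the inequality $\mu_{j+1}^{(k)}/\mu_j^{(k)}>\hat\low\,t_{j+1}^{(k)}/t_j^{(k)}$ yields two facts: the weights $\mu_j^{(k)}$ are themselves $\hat\low$-lacunary (because $t_{j+1}^{(k)}>t_j^{(k)}$), whence $M_n^{(k)}=\sum_{j\le n}\mu_j^{(k)}<\frac{\hat\low}{\hat\low-1}\mu_n^{(k)}$; and the ratios $\mu_j^{(k)}/t_j^{(k)}$ are $\hat\low$-lacunary, whence $\sum_{j>n}\mu_j^{(k)}/t_j^{(k)}\ge \mu_{N_k}^{(k)}/t_{N_k}^{(k)}>\hat\low^{\,N_k-n}\mu_n^{(k)}/t_n^{(k)}$. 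Substituting these into the one-step bound and dividing by $t_n^{(k)}$ gives the geometric per-step estimate
\[
\eps_k:=\frac{t_n^{(k+1)}-t_n^{(k)}}{t_n^{(k)}}<\frac{\hat\low}{\hat\low-1}\,\hat\low^{-(N_k-n)}.
\]

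Finally I telescope. Since $N_k=N_0-k$, the quantities $\eps_k$ form a geometric sequence with ratio $\hat\low$, increasing in $k$, and $N_K-n\ge1$, so
\[
\sum_{k=0}^{K}\eps_k<\frac{\hat\low}{\hat\low-1}\sum_{k=0}^{K}\hat\low^{-(N_0-k-n)}<\frac{\hat\low}{(\hat\low-1)^2}.
\]
Then
\[
\frac{t_n^{(K+1)}}{t_n^{(0)}}=\prod_{k=0}^{K}\bigl(1+\eps_k\bigr)\le \exp\Bigl(\sum_{k=0}^{K}\eps_k\Bigr)<\exp\Bigl(\frac{\hat\low}{(\hat\low-1)^2}\Bigr)<1+\frac{2}{\hat\low},
\]
where the last inequality is a routine numerical check valid for all $\hat\low>5$. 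Besides the gap-free one-step bound, the only thing to watch is this final constant bookkeeping, and the combinatorial fact that the exponent $N_k-n$ decreases by one at each step (so the displacements accumulate into a convergent geometric series whose sum does not depend on $K$).
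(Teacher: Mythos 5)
Your proof is correct, and its skeleton is the same as the paper's: a one-step bound showing that the relative displacement $(t_n^{(k+1)}-t_n^{(k)})/t_n^{(k)}$ is geometrically small in the number $N_k-n$ of poles lying above $t_n^{(k)}$, followed by telescoping the product over the $K+1$ steps and a final numerical check at $\hat{\low}>5$. The genuine difference is how the one-step bound is obtained. The paper applies the right-hand inequality of Statement \ref{approx}, whose denominator contains the subtracted term $M_{n-1}^{(p)}/(t_n^{(p)}-t_{n-1}^{(p)})$; to control it the paper invokes $t_n^{(p)}-t_{n-1}^{(p)}>t_n^{(p)}(1-1/\lambda)$ and $M_{n-1}^{(p)}<\frac{\lambda}{\lambda-1}\mu_{n-1}^{(p)}$, i.e. the pole-separation parameter $\lambda$ from the section's standing assumptions, which does not appear in the lemma's literal hypotheses. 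Your derivation instead returns to \eqref{root} and groups all indices $j\le n$ together, bounding their contribution by $M_n^{(k)}/(s_n-t_n^{(k)})$, so the backward gap $t_n^{(k)}-t_{n-1}^{(k)}$ never enters; the lemma is then established exactly under its stated hypothesis (weights growing faster than poles, $\hat{\low}>5$, poles positive and increasing), with no lacunarity of the poles required. Quantitatively the two routes are comparable: your per-step bound $\frac{\hat{\low}}{\hat{\low}-1}\hat{\low}^{-(N_k-n)}$ plays the role of the paper's $(\hat{\low}-1)^{-(N-p-n)}$ in \eqref{slowgrowth}, and both conclude with the same product-to-exponential estimate and a routine numerical verification. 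So what your argument buys is self-containedness (the lemma as stated, free of the ambient $\lambda$-assumptions), at no cost in length or in the constants.
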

\begin{proof}
From Statement \ref{approx} we deduce that for $0\leq p\leq K$ we have 
\begin{align*}
t^{(p+1)}_n-t^{(p)}_n
&<\mu_n^{(p)}\left(\dfrac{\mu_{N-p}^{(p)}}{t_{N-p}^{(p)}}- \dfrac{M^{(p)}_{n-1}}{t_n^{(p)}-t_{n-1}^{(p)}}\right)^{-1}.
\end{align*}
We also have $t_n^{(p)}-t_{n-1}^{(p)}>t_n^{(p)}\left(1-\frac{1}{\lambda}\right)$ and $M^{(p)}_{n-1}<\frac{\lambda}{\lambda-1}\mu_{n-1}^{(p)}$. 
Therefore,
\begin{equation*}
\frac{t^{(p+1)}_n-t^{(p)}_n}{t_n^{(p)}}
<\left(\dfrac{\mu_{N-p}^{(p)}t_n^{(p)}}{t_{N-p}^{(p)}\mu_n^{(p)}}-1\right)^{-1}
<\dfrac{1}{(\hat{\low}-1)^{N-p-n}}.
\end{equation*}
We get that
\begin{equation}
\label{slowgrowth}
\frac{t^{(p+1)}_n}{t_n^{(p)}}<1+\frac{1}{(\hat{\low}-1)^{N-p-n}},
\end{equation}
from which Lemma \ref{bound} follows.
\end{proof}

\subsection{Invariance of the lacunary parameter}
We are going to show that lacunarity parameter of the poles $\{t_n\}$ does not change too much.

\begin{stm}
\label{lacun}
We have $s_{n+1}-t_{n+1}>\frac{\mu_{n+1}(\lambda-1)}{\mu_{n}\lambda}(s_n-t_n)$. In particular, $s_{n+1}-t_{n+1}>\lambda(s_n-t_n)$ and $s_{n+1}>\lambda s_n$.
\end{stm}
\begin{proof}
Using Statement \ref{approx} we see that it is sufficient to show that
$${\dfrac{\mu_{n+1}}{t_{n+2}-t_{n+1}}+\smm_{k=n+2}^N \dfrac{\mu_k}{t_k-t_{n+1}}}
<\dfrac{\lambda}{\lambda-1} \left({\smm_{k=n+1}^N \frac{\mu_k}{t_k-t_n}- \frac{M_{n-1}}{t_n-t_{n-1}}}\right)
.$$
We note that
$\frac{t_k-t_n}{t_k-t_{n+1}}<
\frac{\lambda}{\lambda-1}$ for $k\geq n+2$.
Therefore, it reduces to obvious inequality
\begin{equation*}
\frac{\mu_{n+1}}{t_{n+2}-t_{n+1}}<\frac{\lambda}{\lambda-1}\left(\frac{\mu_{n+1}}{t_{n+1}-t_n}-\frac{M_{n-1}}{t_n-t_{n-1}}\right).
\end{equation*}
\end{proof}

\subsection{Lower bound}
In this subsection we prove that under some assumptions the sequence $\frac{\mu_{n+1}}{\mu_n}$ does not decrease much after one step of Stieltjes algorithm.

\begin{stm}
\label{lowstep}
Let $\theta < \frac{1}{10}$.
Then $\frac{w_{n+1}}{w_n}> \frac{\mu_{n+1}}{\mu_n}$.
\end{stm}
\begin{proof}
From \eqref{root} we know that
$$\left(\smm_{k=1}^N \dfrac{\mu_k}{(t_k-s_{n+1})^2}\right)\frac{w_{n+1}}{w_n}=\smm_{k=1}^N \dfrac{\mu_k}{(t_k-s_n)^2}.$$
Let $k<n$. 
Then from 
$\frac{(s_{n+1}-t_k)^2}{(s_n-t_k)^2}>\frac{s_{n+1}^2}{s_n^2}>\frac{\mu_{n+1}}{2\theta\mu_n}$ it 
follows that $\frac{\mu_k}{(s_n-t_k)^2}>\frac{\mu_{n+1}}{\mu_n} \cdot \frac{\mu_k}{(s_{n+1}-t_k)^2}$. Also $\frac{\mu_{n+1}}{(t_{n+1}-s_n)^2}>\frac{\mu_{n+1}}{\mu_n} \cdot \frac{\mu_{n}}{(s_{n+1}-t_n)^2}.$
Therefore, it is sufficient to show that
\begin{equation}
\label{goal}
\frac{\mu_n^2}{(s_n-t_n)^2}
>\dfrac{\mu_{n+1}^2}{(s_{n+1}-t_{n+1})^2}+\smm_{k=n+2}^N \dfrac{\mu_{n+1}\mu_k}{(t_k-s_{n+1})^2}.
\end{equation}
The main asymptotics on the right-hand side is given by $\frac{\mu_{n+1}^2}{(s_{n+1}-t_{n+1})^2}$.

\vspace{3pt}
\textbf{Step 1. Estimate of the main term.} By subtracting \eqref{root} for $n$ from \eqref{root} for $n+1$ we get

\begin{equation}
\label{mainEqality}
\dfrac{\mu_{n+1}}{s_{n+1}-t_{n+1}}
=\dfrac{\mu_n}{s_n-t_n}\cdot\dfrac{t_{n+1}-s_n}{s_{n+1}-t_{n}}+\smm_{\substack{1 \leq k \leq N\\
k\neq n, n+1}} \dfrac{\mu_k(t_{n+1}-s_n)}{(s_{n+1}-t_k)(s_n-t_k)}.
\end{equation}
Hence,
\begin{multline}
\label{keyineq}
\dfrac{\mu_{n+1}}{s_{n+1}-t_{n+1}}<
\dfrac{\mu_n}{s_n-t_n}-\dfrac{\mu_n}{s_{n+1}-t_{n}}\cdot\left(1+\dfrac{s_{n+1}-t_{n+1}}{s_n-t_n}\right)\\
+\dfrac{M_{n-1}(t_{n+1}-t_n)}{(t_{n+1}-t_{n-1})(t_n-t_{n-1})}+\smm_{l=n+2}^N \dfrac{\mu_l(t_{n+1}-t_n)}{(t_l-s_{n+1})^2}\\
=\mathfrak{I_1}-\mathfrak{I_2}+\mathfrak{I_3}+\mathfrak{I_4}.
\end{multline}

Now we need to estimate $\mathfrak{I_2}, \mathfrak{I_3}$ and $\mathfrak{I_4}$. 

\textbf{Step 2.}  We are going to prove that $\mathfrak{I_2}>2(\mathfrak{I_3}+\mathfrak{I_4})$.
From Statement \ref{lacun}, we see that it is sufficient to prove two inequalities:
\begin{equation}
\label{left}
\frac{1}{\lambda}\cdot\frac{\mu_n}{s_{n+1}-t_n}\cdot \dfrac{\mu_{n+1}(\lambda-1)}{\mu_n\lambda}
>\dfrac{\sum_{k=1}^{n-1}\mu_k(t_{n+1}-t_n)}{(t_{n+1}-t_{n-1})(t_n-t_{n-1})},
\end{equation}
\begin{equation}
\label{right}
\frac{1}{3}\cdot\frac{\mu_n}{s_{n+1}-t_n}\cdot\dfrac{\mu_{n+1}(\lambda-1)}{\mu_n\lambda}
>\smm_{l=n+2}^N \dfrac{\mu_l(t_{n+1}-t_n)}{(t_l-s_{n+1})^2}.
\end{equation}

\textbf{2a.} Now we are going to prove \eqref{left}. Clearly $\mathfrak{I_4} < \left(\frac{\lambda}{\lambda-1}\right)^3\cdot\frac{\mu_{n-1}}{t_n}$.
%
Therefore, it reduces to obvious inequality
\begin{equation*}
\frac{\mu_{n+1}}{s_{n+1}-t_n}>\lambda\left(\dfrac{\lambda}{\lambda-1}\right)^4\cdot\dfrac{\mu_{n-1}}{t_n}.
\end{equation*}

\textbf{2b.} Now we are going to prove \eqref{right}.
Note that
$$\dfrac{\mu_{l+1}}{(t_{l+1}-s_{n+1})^2}\left(\dfrac{\mu_l}{(t_l-s_{n+1})^2}\right)^{-1}
<\dfrac{\mu_{l+1}}{\mu_l}\cdot\frac{t_l^2}{t_{l+1}^2}
<\theta.$$
Hence, using Lemma \ref{bound}, we get
%
\begin{gather*}
\smm_{l=n+2}^N \dfrac{\mu_l(t_{n+1}-t_n)}{(t_l-s_{n+1})^2}
<\dfrac{1}{(1-\theta)(1-\frac{3}{\lambda})^2}\cdot\dfrac{\mu_{n+2}t_{n+1}}{t_{n+2}^2}.
\end{gather*}
In order to prove inequality \eqref{right} it remains to show that
$$\dfrac{\mu_{n+1}}{t_{n+1}^2}>6\dfrac{\lambda}{(\lambda-1)(1-\theta)(1-\frac{3}{\lambda})^2}\dfrac{\mu_{n+2}}{t_{n+2}^2},$$
which is true for $\theta<\frac{1}{10}$.
So, we have proved inequalities \eqref{left} and \eqref{right}, and, therefore, $\mathfrak{I_2}>2(\mathfrak{I_3}+\mathfrak{I_4})$. Hence, because of \eqref{keyineq}, we have
\begin{equation}
\label{halfway}
\dfrac{\mu_{n+1}}{s_{n+1}-t_{n+1}}<
\dfrac{\mu_n}{s_n-t_n}-\frac{1}{2}\cdot\dfrac{\mu_n}{s_{n+1}-t_{n}}\cdot\left(1+\dfrac{s_{n+1}-t_{n+1}}{s_n-t_n}\right).
\end{equation}

\textbf{Step 3.}
We want to prove inequality \eqref{goal}. Rewrite it in the following way
\begin{equation}
\left(\frac{\mu_n}{s_n-t_n}-\dfrac{\mu_{n+1}}{s_{n+1}-t_{n+1}}\right)\left(\frac{\mu_n}{s_n-t_n}+\dfrac{\mu_{n+1}}{s_{n+1}-t_{n+1}}\right)
>\smm_{k=n+2}^N \dfrac{\mu_{n+1}\mu_k}{(t_k-s_{n+1})^2}.
\end{equation}
From \eqref{halfway} it follows that it is sufficient to show that
\begin{equation*}
\dfrac{\mu_n}{s_{n+1}-t_{n}}\cdot\frac{\mu_{n+1}}{\mu_n}\cdot\dfrac{\mu_{n+1}}{s_{n+1}-t_{n+1}}
>\dfrac{1}{1-\theta}\cdot\dfrac{\mu_{n+1}\mu_{n+2}}{(1-\frac{3}{\lambda})^3(t_{n+2}-s_{n+1})^2},
\end{equation*}
which holds because of inequality \eqref{slowgrowth} and $\theta<\frac{1}{10}$.

\end{proof}

\subsection{Upper bound}
We are going to show how do the ratios of successive elements of  $\left\{\frac{\mu_n}{t_n^2}\right\}_{n=1}^N$ change after one step of the Stieltjes algorithm.
\begin{stm}
\label{upstep}
Let $\loctheta = \max \left\{\frac{\mu_{n+1}t_{n}^2}{\mu_{n}t_{n+1}^2}, \frac{1}{\lambda}\right\}$.
Then $\frac{w_{n+1}s_{n}^2}{w_{n}s_{n+1}^2}<\left(1+\eps_n\right)^3 \loctheta$, where $\eps_n = 5(\low-1)^{n-N}$.
\end{stm}
\begin{proof}
We want to prove the following inequality
\begin{equation}
\label{goallessstrong}
\smm_{k=1}^N \dfrac{\mu_ks_n^2}{(t_k-s_n)^2}<\left(1+\eps_n\right)^3\loctheta\smm_{k=1}^N \dfrac{\mu_ks_{n+1}^2}{(t_k-s_{n+1})^2}.
\end{equation}

\textbf{Step 1.} First, we will compare the main terms on both sides of the inequality \eqref{goallessstrong}.
We are going to show that
\begin{equation}
\label{mainlessstrong}
\dfrac{\mu_n s_n^2}{(s_n-t_n)^2}<\left(1+\eps_n\right)^2\loctheta \dfrac{\mu_{n+1}s_{n+1}^2}{(s_{n+1}-t_{n+1})^2}.
\end{equation}
It is sufficient to prove that
\begin{equation*}
\dfrac{\mu_n^2 s_n^2}{t_n^2(s_n-t_n)^2}<\left(1+\eps_n\right)^2\dfrac{\mu_{n+1}^2s_{n+1}^2}{t_{n+1}^2(s_{n+1}-t_{n+1})^2},
\end{equation*}
which will follow from
\begin{equation*}
\dfrac{\mu_n}{s_n-t_n}-\dfrac{\mu_{n+1}}{s_{n+1}-t_{n+1}}<\dfrac{\mu_{n+1}}{t_{n+1}}-\dfrac{\mu_n}{t_n}+\eps_n\cdot\dfrac{\mu_{n+1}}{s_{n+1}-t_{n+1}}.
\end{equation*}

Notice that from the identity \eqref{root} we can get that
\begin{equation} \label{upperRootDiffer}
\frac{\mu_n}{s_n-t_n}-\frac{\mu_{n+1}}{s_{n+1}-t_{n+1}} = \frac{\mu_n}{s_{n+1}-t_n}-\frac{\mu_{n+1}}{s_n-t_{n+1}}+\smm_{k\neq n, n+1}\frac{\mu_k(s_n-s_{n+1})}{(s_{n+1}-t_k)(s_n-t_k)}.
\end{equation}
We need to prove that the right-hand side is smaller than
$\dfrac{\mu_{n+1}}{t_{n+1}}-\dfrac{\mu_n}{t_n}+\eps_n\cdot\dfrac{\mu_{n+1}}{s_{n+1}-t_{n+1}}$. This reduces to inequality
\begin{equation*}
\frac{\mu_ns_{n+1}}{(s_{n+1}-t_n)t_n}+\frac{\mu_{n+1}s_n}{(t_{n+1}-s_n)t_{n+1}}
<\eps_n\cdot\dfrac{\mu_{n+1}}{s_{n+1}-t_{n+1}},
\end{equation*}
%
which follows from \eqref{slowgrowth}.
Therefore, we have proved inequality \eqref{mainlessstrong}.  
\vspace{1pt}

\textbf{Step 2.} In order to prove \eqref{goallessstrong} it is sufficient to show that
\begin{equation}
\label{localgoalweak}
\smm_{k\neq n} \dfrac{\mu_ks_n^2}{(t_k-s_n)^2}<\tilde{\theta}\left(\dfrac{\mu_{n}s_{n+1}^2}{(s_{n+1}-t_n)^2}+\eps_n\dfrac{\mu_{n+1}s_{n+1}^2}{(s_{n+1}-t_{n+1})^2}\right).
\end{equation}
We will prove that
\begin{equation}
\label{oneweak}
\smm_{k=1}^{n-1} \dfrac{\mu_ks_n^2}{(t_k-s_n)^2}
<\tilde{\theta}\dfrac{\mu_{n}s_{n+1}^2}{(s_{n+1}-t_n)^2};
\end{equation}
\begin{equation}
\label{twoweak}
\smm_{k=n+1}^{N} \dfrac{\mu_ks_n^2}{(t_k-s_n)^2}
<\tilde{\theta}\eps_n\dfrac{\mu_{n+1}s_{n+1}^2}{(s_{n+1}-t_{n+1})^2}.
\end{equation}
From these two inequalities \eqref{localgoalweak} follows.

\textbf{2a.}
Notice that
\begin{equation*}
\tilde{\theta}\frac{\mu_{n}s_{n+1}^2}{(s_{n+1}-t_n)^2}>\tilde{\theta} \mu_n>2\mu_{n-1}>\smm_{k=1}^{n-1} \dfrac{\mu_ks_n^2}{(t_k-s_n)^2}.
\end{equation*}
This proves \eqref{oneweak}.

\textbf{2b.} Inequality \eqref{twoweak} follows from \eqref{slowgrowth}, $\loctheta>\frac{1}{\lambda}$ and 
\begin{equation*}
\smm_{k=n+1}^{N} \dfrac{\mu_ks_n^2}{(t_k-s_n)^2}
<\frac{\mu_{n+1}s_n^2}{(1-\theta)(1-\frac{3}{\lambda})^2t_{n+1}^2}.
\end{equation*}

We have proved \eqref{twoweak} and, therefore, \eqref{localgoalweak}.
\end{proof}
Now Lemmas \ref{mainlemma} and \ref{lemmaPoleBound} follow by induction from Statements \ref{lacun}, \ref{lowstep}, \ref{upstep} and Lemma \ref{bound}.

\section{Proof of Theorem \ref{ThrRevearsed}}
\label{ProofThrReversed}
We will analyse one step of reversed Stieltjes algorithm. 
Suppose equation \eqref{oneStepStieltjes} holds. Then obviously
\begin{equation}
\label{rootReversed}
t_n - b + \sum_{k=1}^{N-1}\frac{w_k}{s_k-t_n} = 0; \qquad
1 + \sum_{k=1}^{N-1}\frac{w_k}{(s_k-t_n)^2} = \frac{1}{\mu_n}.
\end{equation}
We will assume that 
\begin{enumerate}[(i)]
\item $b > s_{N-1}\lambda$ and $s_{n+1}>\lambda s_n$ for any $n$;
\item $\frac{w_{n+1}}{w_n} > \low \frac{s_{n+1}}{s_n}$ and $\frac{w_{n+1}}{s_{n+1}^2}<\theta\frac{w_n}{s_n^2}$ for any $n$ and some $\low > 10$, $\theta<1$;
\item $10\theta^{-1} s_{N-1}^2 < \sum_{k=1}^{N-1}w_k < \frac{1}{10} \low^{-1} b s_{N-1}$.
\end{enumerate}
We will proceed in the same way as in Section \ref{ProofLemmas}.

\subsection{Root localization}
We are going to estimate $t_n$.
\begin{stm}
\label{DirectRootLocal}
We have
$\left(1-\frac{2}{\low}\right)\frac{w_n}{b}<s_n-t_n<\left(1+\frac{2}{\lambda}+\frac{2}{\low}\right) \frac{w_n}{b}$ for $n<N$ and $b<t_N<\left(1+\frac{1}{\lambda\low}\right) b$.
\end{stm}
\begin{proof}
\textbf{Case 1. $n<N.$} From \eqref{rootReversed} we have
\begin{equation*}
\frac{w_n}{s_n-t_n} 
> b\left(1-\frac{1}{\lambda}\right)-\left(1+\frac{2}{\lambda}\right)\sum_{k>n}\frac{w_k}{s_k}
> b\left(1-\frac{1}{\lambda}\right)-\frac{\low^{-1}bs_{N-1}}{5s_{N-1}}
>b \left(1-\frac{1}{\lambda}-\frac{1}{\low}\right).
\end{equation*} 
Hence, $s_n-t_n<\left(1+\frac{2}{\lambda}+\frac{2}{\low}\right)\frac{w_n}{b}.$ 
On the other hand,
\begin{equation*}
\frac{w_n}{s_n-t_n}
< b+\left(1+\frac{2}{\lambda}\right)\sum_{k<n}\frac{w_k}{t_n}
< b + \frac{\low^{-1}bs_{N-1}}{5s_{N-1}}
< \left(1+\frac{1}{\low}\right) b.
\end{equation*}

\textbf{Case 2. $n=N.$}
From \eqref{rootReversed} we get $t_N>b$ and
\begin{equation*}
t_N<b+\sum_{k=1}^{N-1}\frac{w_k}{b-s_k}<b\left(1+\frac{1}{\lambda\low}\right).
\end{equation*}
\end{proof}
\begin{crl}
\label{reversedRootCrl}
For $n<N$ we have $s_n-t_n<\frac{1}{5} \low^{n-N} s_n$.
\end{crl}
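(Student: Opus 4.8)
The plan is to feed the upper estimate just established in Statement \ref{DirectRootLocal} into the lacunarity hypotheses (ii) and (iii). Since that statement gives $s_n-t_n<\left(1+\frac{2}{\lambda}+\frac{2}{\low}\right)\frac{w_n}{b}$ for $n<N$, everything reduces to controlling the ratio $\frac{w_n}{b}$, and the conclusion $s_n-t_n<\frac{1}{5}\low^{n-N}s_n$ suggests we should aim for a bound of the shape $\frac{w_n}{b}\lesssim \low^{n-N}s_n$.

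First I would exploit hypothesis (ii), which states $\frac{w_{k+1}}{w_k}>\low\frac{s_{k+1}}{s_k}$; equivalently, the sequence $\left\{\frac{w_k}{s_k}\right\}$ grows at least geometrically with ratio $\low$. Telescoping from index $n$ up to $N-1$ then yields
\begin{equation*}
\frac{w_n}{s_n}<\low^{\,n-N+1}\frac{w_{N-1}}{s_{N-1}}.
\end{equation*}
Next I would use the right-hand half of hypothesis (iii), namely $\sum_{k=1}^{N-1}w_k<\frac{1}{10}\low^{-1}bs_{N-1}$. Keeping just the single term $k=N-1$ gives $w_{N-1}<\frac{1}{10}\low^{-1}bs_{N-1}$, hence $\frac{w_{N-1}}{s_{N-1}b}<\frac{1}{10\low}$.

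Combining these two bounds, I would write $\frac{w_n}{b}=\frac{w_n}{s_n}\cdot\frac{s_n}{b}$ and substitute:
\begin{equation*}
\frac{w_n}{b}<\low^{\,n-N+1}\cdot\frac{1}{10\low}\cdot s_n=\frac{1}{10}\low^{\,n-N}s_n.
\end{equation*}
Feeding this back into the estimate from Statement \ref{DirectRootLocal} gives $s_n-t_n<\left(1+\frac{2}{\lambda}+\frac{2}{\low}\right)\frac{1}{10}\low^{\,n-N}s_n$. Since $\lambda>1000$ and $\low>100$, the prefactor $1+\frac{2}{\lambda}+\frac{2}{\low}$ is at most $2$, which absorbs into the constant and yields the claimed $s_n-t_n<\frac{1}{5}\low^{\,n-N}s_n$.

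I do not expect any genuine obstacle here: the corollary is a short chain of the standing hypotheses, and its whole content is to repackage the estimate of Statement \ref{DirectRootLocal} into an explicitly geometric form in $\low^{\,n-N}$. The one point requiring care is the index bookkeeping in the telescoping step — the exponent $n-N+1$ produced by hypothesis (ii) must be paired with the extra factor $\low^{-1}$ coming from (iii), so that the two combine to the exponent $n-N$ appearing in the statement. As long as that alignment is respected and the prefactor is checked to be below $2$ using the size assumptions on $\lambda$ and $\low$, the bound follows immediately.
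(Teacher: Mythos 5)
Your proof is correct and is essentially the paper's own argument written out in full: the paper's one-line proof cites exactly the telescoped bound $\frac{w_n}{bs_n} < \low^{n-N+1}\frac{w_{N-1}}{bs_{N-1}}$ from hypothesis (ii), which is then combined (implicitly) with $w_{N-1}<\frac{1}{10}\low^{-1}bs_{N-1}$ from (iii) and the upper estimate of Statement \ref{DirectRootLocal}, precisely as you do. Your index bookkeeping and the absorption of the prefactor $1+\frac{2}{\lambda}+\frac{2}{\low}<2$ into the constant $\frac{1}{5}$ are both accurate.
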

\begin{proof}
It follows from  $\frac{w_n}{bs_n} < \low^{n-N+1}\frac{w_{N-1}}{bs_{N-1}}$.
\end{proof}

\subsection{Lower bound}
Now we are going to estimate $\frac{\mu_{n+1}}{\mu_n}$.
\begin{stm}
We have $(1+\eps_n)^2\frac{\mu_{n+1}}{\mu_n}>\frac{w_{n+1}}{w_n}$, where $\eps_n = \low^{n-N+1}$, and $\frac{\mu_{N}}{\mu_{N-1}}>\low \frac{t_N}{t_{N-1}}.$
\end{stm}

\begin{proof}

\textbf{Case 1. $n<N-1.$}
It is sufficient to show that 
\begin{equation*}
(1+\eps_n)^2\frac{w^2_n}{(s_n-t_n)^2}
> \left(w_{n+1} + \sum_{k=1}^{N-1}\frac{w_kw_{n+1}}{(s_k-t_{n+1})^2}\right).
\end{equation*}

We are going to show that
\begin{equation*}
\left(\frac{(1+\eps_n)w_n}{s_n-t_n}-\frac{w_{n+1}}{s_{n+1}-t_{n+1}}\right)
\left(\frac{(1+\eps_n)w_n}{s_n-t_n}+\frac{w_{n+1}}{s_{n+1}-t_{n+1}}\right)
> w_{n+1} + \sum_{\substack{1\leq k\leq N-1\\
k\neq n+1}}\frac{w_kw_{n+1}}{(s_k-t_{n+1})^2}.
\end{equation*}

Analogously to \eqref{mainEqality} we have
\begin{equation*}
\frac{w_{n}}{s_n-t_n}
= \frac{w_{n+1}}{s_{n+1}-t_{n+1}}\cdot \frac{t_{n+1}-s_{n}}{s_{n+1}-t_n}
+t_{n+1}-s_{n}
+ \sum_{k\neq n, n+1}\frac{w_k(t_{n+1}-s_n)}{(t_{n+1}-s_k)(t_n-s_k)}.
\end{equation*}

From Corollary \ref{reversedRootCrl} we have $\frac{t_{n+1}-s_n}{s_{n+1}-t_n}>1-\low^{n-N+1}$.
Hence, 
\begin{equation*}
(1+\eps_n)\frac{w_n}{s_n-t_n}-\frac{w_{n+1}}{s_{n+1}-t_{n+1}}
> \left(1- \frac{3^{n-N+1}}{2\lambda}\right)t_{n+1}
\left(1+\sum_{k\neq n, n+1}\frac{w_k(t_{n+1}-s_n)}{(t_{n+1}-s_k)(t_n-s_k)}\right).
\end{equation*}
Also, 
\begin{equation*}
(1+\eps_n)\frac{w_n}{s_n-t_n}+\frac{w_{n+1}}{s_{n+1}-t_{n+1}}>b.
\end{equation*}
Moreover,
$bt_{n+1}>5w_{n+1}$. This gives the desired.

\textbf{Case 2. $n=N-1.$} It is sufficient to show that
%
\begin{equation*}
\frac{w_{N-1}^2}{(s_{N-1}-t_{N-1})^2}
> 2\low \frac{b}{s_{N-1}}\left(w_{N-1}+\sum_{k=1}^{N-1}\frac{w_{N-1}w_k}{(s_k-t_{N})^2}\right).
\end{equation*}

Analogously to \eqref{mainEqality} we have
\begin{equation*}
\frac{w_{N-1}}{s_{N-1}-t_{N-1}}
= t_{N}-s_{N-1}
+ \sum_{k=1}^{N-2}\frac{w_k(t_{N}-s_{N-1})}{(t_{N}-s_k)(t_{N-1}-s_k)}.
\end{equation*}
Also, 
$\frac{w_{N-1}}{s_{N-1}-t_{N-1}}>\frac{b}{2}.$ Moreover,
\begin{equation*}
\frac{w_{N-1}^2}{(s_{N-1}-t_{N-1})^2}
> 5 \low\frac{b}{s_{N-1}}\cdot \frac{w_{N-1}^2}{(t_N-s_{N-1})^2}.
\end{equation*}
Also, $bt_N > 10\low \dfrac{b}{s_{N-1}} w_{N-1}.$ This gives the required inequality.
\end{proof}

\subsection{Upper bound}
Now we are going to estimate $\frac{\mu_n}{t_n^2}$.

\begin{stm}

Let $\theta<\frac{1}{10}$. Denote  $\tilde{\theta} = \max\left\{\frac{w_{n+1}s_n^2}{w_ns_{n+1}^2}, \frac{1}{\lambda}\right\}$. 
Then $\left(1+\eps_n\right)^3\tilde{\theta}>\frac{\mu_{n+1}t_{n}^2}{\mu_{n}t_{n+1}^2}$ for $\eps_n = 6\lambda^{n-N+1}+2\low^{n-N+1}$ and $\frac{\mu_N}{t_N^2}<\theta \frac{\mu_{N-1}}{t_{N-1}^2}$.
\end{stm}
\begin{proof}
\textbf{Case 1.} $n < N-1$. 

It is sufficient to prove that
\begin{equation}
t_n^2 + \sum_{k=1}^{N-1}\frac{t_n^2w_k}{(s_k-t_n)^2} 
< (1+\eps_n)^3 \tilde{\theta}\left( t_{n+1}^2 + \sum_{k=1}^{N-1}\frac{t_{n+1}^2w_k}{(s_k-t_{n+1})^2} \right).
\end{equation}

\textbf{Step 1.} First, we compare main terms. We are going to prove that
\begin{equation} \label{revearsedUppStepOneMain}
\frac{t_n^2w_n}{(s_n-t_n)^2} 
< (1+\eps_n)^2\tilde{\theta}\frac{t_{n+1}^2w_{n+1}}{(s_{n+1}-t_{n+1})^2}.
\end{equation}
This will follow from 
\begin{equation*}
\frac{w_n}{s_n-t_n} - \frac{w_{n+1}}{s_{n+1}-t_{n+1}}
<  \frac{w_n}{s_n} - \frac{w_{n+1}}{s_{n+1}} + \frac{\eps_n}{2} \cdot \frac{w_{n+1}}{s_{n+1}-t_{n+1}}.
\end{equation*}
Notice that from \eqref{rootReversed} we have
\begin{equation*}
\frac{w_n}{s_n-t_n} - \frac{w_{n+1}}{s_{n+1}-t_{n+1}} 
= t_{n+1}-t_n + \frac{w_n}{s_n-t_{n+1}} - \frac{w_{n+1}}{s_{n+1}-t_n} + \sum_{k \neq n, n+1} \frac{w_k(t_{n+1}-t_n)}{(s_k-t_{n+1})(s_k-t_{n})}.
\end{equation*}
So, we want to prove that
\begin{equation*}
\frac{\eps_n}{2} \cdot \frac{w_{n+1}}{s_{n+1}-t_{n+1}}
> t_{n+1}-t_n - \frac{w_nt_{n+1}}{s_n(t_{n+1}-s_n)} - \frac{w_{n+1}t_n}{s_{n+1}(s_{n+1}-t_n)} + \sum_{k \neq n, n+1} \frac{w_k(t_{n+1}-t_n)}{(s_k-t_{n+1})(s_k-t_{n})}.
\end{equation*}
Note that 
\begin{equation*}
\sum_{k \neq n, n+1} \frac{w_k(t_{n+1}-t_n)}{(s_k-t_{n+1})(s_k-t_{n})}
<2\frac{w_{n-1}}{s_n} + 3\frac{w_{n+2}s_{n+1}}{s_{n+2}^2}
\end{equation*}
We have
\begin{equation*}
2\frac{w_{n-1}}{s_n} < \frac{w_nt_{n+1}}{s_n(t_{n+1}-s_n)},
\end{equation*}
and
\begin{equation*}
3\frac{w_{n+2}s_{n+1}}{s_{n+2}^2}
< 3\frac{w_{n+1}}{s_{n+1}} < \frac{3}{5} \low^{n-N+1}\frac{w_{n+1}}{s_{n+1}-t_{n+1}}.
\end{equation*}
Also,
\begin{equation*}
t_{n+1}-t_n 
< 2\lambda^{n-N+1} b < 3 \lambda^{n-N+1} \frac{w_{n+1}}{s_{n+1}-t_{n+1}}.
\end{equation*}
This gives us \eqref{revearsedUppStepOneMain}.

\textbf{Step 2.} Now we are going to compare the remaining terms on both sides. We will show that
\begin{equation*}
t_n^2 + \sum_{k\neq n}\frac{t_n^2w_k}{(s_k-t_n)^2} 
< \tilde{\theta}\left( t_{n+1}^2 + \frac{t_{n+1}^2w_n}{(t_{n+1}-s_n)^2} 
+ \eps_n\frac{t^2_{n+1}w_{n+1}}{(s_{n+1}-t_{n+1})^2}\right).
\end{equation*}
Indeed,
\begin{gather*}
t_n^2 < \tilde{\theta}t_{n+1}^2, \\
\sum_{k < n}\frac{t_n^2w_k}{(s_k-t_n)^2} 
< 2 w_{n-1}< \tilde{\theta} \frac{t_{n+1}^2w_n}{(t_{n+1}-s_n)^2}, \\
\sum_{k > n}\frac{t_n^2w_k}{(s_k-t_n)^2}
< \frac{3t_n^2w_{n+1}}{(s_{n+1}-t_n)^2}
< \low^{n-N+1}\tilde{\theta}\frac{t^2_{n+1}w_{n+1}}{(s_{n+1}-t_{n+1})^2}.
\end{gather*}

%
%

\textbf{Case 2.} $n = N-1$.

\textbf{Step 1.} We again compare main terms. Since $s_{N-1}-t_{N-1}>\frac{w_{N-1}}{2b}$,
\begin{equation*}
\frac{t_{N-1}^2w_{N-1}}{(s_{N-1}-t_{N-1})^2} < \frac{\theta}{2} t_{N}^2.
\end{equation*}

\textbf{Step 2.} We show that
\begin{equation*}
t_{N-1}^2 + \sum_{k \leq N-2}\frac{t_{N-1}^2w_k}{(s_k-t_{N-1})^2} 
< \frac{\theta}{2}\left( t_{N}^2 + \frac{t_{N}^2w_{N-1}}{(t_{N}-s_{N-1})^2}\right).
\end{equation*}
Indeed,
\begin{gather*}
t_{N-1}^2 < \frac{\theta}{2}t_{N}^2, \\
\sum_{k \leq N-2}\frac{t_{N-1}^2w_k}{(s_k-t_{N-1})^2} 
< 2 w_{N-2}< \frac{\theta}{2} \frac{t_{N}^2w_{N-1}}{(t_{N}-s_{N-1})^2}.
\end{gather*}
\end{proof}

Now Theorem \ref{ThrRevearsed} follows by induction.

\subsection{Remark about lacunarity of $\left\{\frac{\mu_k}{t_k}\right\}$}
\label{CounterEx}
We show that if we omit the lacunarity condition for $\left\{\frac{\mu_{k}}{t_k}\right\}$, then analysis of the corresponding Jacobi matrix coefficients can be more cumbersome. We still assume lacunarity of $\{t_k\}$, $\{\mu_k\}$ and $\left\{\frac{\mu_k}{t_k^2} \right\}$.

We fix some spectral data $\left\{\mu_k\right\}_{k=1}^N$, $\left\{t_k\right\}_{k=1}^N$ that satisfies these conditions and then let $t_N$ tend to $\infty$. Now we analyze what happens after one step of the Stieltjes algorithm (we again use \eqref{oneStepStieltjes}). We will study asymptotics of $q_n.$

Let also
\begin{equation*}
-\left(\sum_{k=1}^{N-1}\frac{\mu_k}{t_k-z}\right)^{-1} = a'z-b'+\sum_{k=1}^{N-2}\frac{w_k'}{s_k'-z}.
\end{equation*}
Then, because of \eqref{root}, when $t_N \to \infty$ we have $s_k \to s'_k$ and $w_k \to w'_k$ for $1\leq k \leq N-2$. Now we need to analyze asymptotics of $s_{N-1}$ and $w_{N-1}$. 

We have
\begin{equation*}
\sum_{k=1}^{N-1} \frac{\mu_k}{s_{N-1}-t_k} = \frac{\mu_N}{t_N-s_{N-1}}.
\end{equation*}
Hence, 
\begin{equation*}
\frac{s_{N-1}}{t_N} > \frac{\sum_{k=1}^{N-1}\mu_k}{\sum_{k=1}^{N}\mu_k}, \qquad
\frac{t_N-s_{N-1}}{t_N-t_{N-1}} > \frac{\mu_N}{\sum_{k=1}^N\mu_k}.
\end{equation*}
In particular this means that $s_{N-1} \sim t_{N}$. Moreover,
\begin{equation*}
\frac{1}{w_{N-1}} = \sum_{k=1}^{N}\frac{\mu_k}{(s_{N-1}-t_k)^2} \sim \frac{\mu_N}{(t_N-s_{N-1})^2} \sim \frac{1}{t_N^2}.
\end{equation*}
So, $w_{N-1} \to \infty$. By \eqref{coeffFormulas} we see that $q_2 \sim  s_{N-1} \sim t_N$. This means that ratio $q_2$ over $t_{N-1}$ can be arbitrarily big.

\section{Proof of Theorem \ref{FTTheorem} and Fock-Type Spaces}
\label{SectProofFT}
We use notation from Section \ref{IntroFTSpaces}.
We also assume that $r_{k+1}>\lambda r_k$, $\data_{k+1} > \low \data_k$ and $\frac{\data_{k+1}}{r_{k+1}^2}<\theta\frac{\data_k}{r_k^2}$ for some $\lambda > 1000, \low >1000$ and $\theta<1$.
%
%
The corresponding spectral data measure of Jacobi matrix is given by 
$\spd = \sum_{k=1}^{\infty}\spd_k\delta_{r_k},$
where 
\begin{equation} \label{FTspd}
\spd_k = c \nu_{k}^{-1}r_k^2\prod_{l\neq k}\left(1-\dfrac{r_k}{r_l}\right)^{-2}.
\end{equation} 
Here $c$ is a normalizing constant such that $\spd(\R) = 1.$
Clearly, 
\begin{equation} \label{FTmainIneq}
\left(1-\frac{10}{\lambda}\right) \low \left(\frac{r_{k+1}}{r_k}\right)^{2k-2}
<\frac{\spd_{k}}{\spd_{k+1}}
< \left(1+\frac{10}{\lambda}\right)\theta \left(\frac{r_{k+1}}{r_k}\right)^{2k}.
\end{equation}

Now our goal is to estimate spectral data $\left\{\spd_k^{(1)}, r_k^{(1)}\right\}$ after one step of the Stieltjes algorithm.

\subsection{Change of variables}
Let $\tau_k = r_k^{-1}$, $\alpha_k = \spd_k r_k^{-1}$ and $\zeta = z^{-1}$. Note that
\begin{equation*}
\sum_{k\geq 1}\frac{\spd_k}{r_k-z} = z^{-1}\sum_{k\geq 1}\frac{\spd_k r_k^{-1}}{z^{-1}-r_k^{-1}} = \zeta \sum_{k\geq 1} \frac{\alpha_k}{\zeta-\tau_k},
\end{equation*}
\begin{equation*}
\sum_{k\geq 1} \frac{\spd_k}{(r_k-z)^2} = \frac{1}{r_k z^2}\sum_{k\geq 1} \frac{\spd_k r_k^{-1}}{(z^{-1}-r_k^{-1})^2} =  \zeta^2 \sum_{k\geq 1} \frac{\alpha_k \tau_k}{(\zeta-\tau_k)^2}.
\end{equation*}

Thus, analyzing one step of Stieltjes algorithm for $\{\alpha_k, \tau_k\}$ will help us to analyze Stieltjes algorithm for $\{\spd_k, r_k\}$. Let $\{\alpha_k^{(1)}, \tau_k^{(1)}\}$ be spectral data after one step of Stieltjes algorithm, applied to data $\{\alpha_k, \tau_k\}$.


Then $\tau_k > \lambda \tau_{k+1}$ and
\begin{equation*}
\left(1-\frac{10}{\lambda}\right)\low \alpha_{k+1} \left(\frac{\tau_k}{\tau_{k+1}}\right)^{2k-1}
<\alpha_k 
< \left(1+\frac{10}{\lambda}\right) \theta \alpha_{k+1}  \left(\frac{\tau_k}{\tau_{k+1}}\right)^{2k+1}.
\end{equation*}

For the reader convenience in this subsection we will consider the finite case (taking $\{\alpha_{k}, \tau_k\}_{k=1}^N$). Then we will take limit in $N$.

We put $\mu_{k} = \alpha_{N-k+1}$, $t_k = \tau_{N-k+1}$. And now we can proceed in the same manner as in Section \ref{ProofLemmas}, since $\frac{\mu_{n+1}}{\mu_n}>\left(1-\frac{10}{\lambda}\right)\low \left(\frac{t_{n+1}}{t_n}\right)^{2N-2n-1}$. In particular, Statements \ref{approx}, \ref{lacun}, Lemma \ref{bound} and \eqref{slowgrowth} still hold 
(we again use \eqref{root}).

Now, $r^{(1)}_n = \dfrac{1}{\tau^{(1)}_{N-n}}$ which is approximated by $\dfrac{1}{s_{N-n}}$ and 
$$\spd_n^{(1)} = \left(\sum_{k\geq 1} \frac{\spd_k}{(r_n^{(1)}-r_k)^2}\right)^{-1}
=\left((\tau_n^{(1)})^3\sum_{k\geq 1} \frac{\alpha_k}{(\tau_n^{(1)}-\tau_k)^2}\right)^{-1},$$
which is approximated by $\dfrac{w_{N-n}}{s_{N-n}^3}$.

\begin{lmm}
\label{FTlowstepweak}
Let  $\delta_n = 4\max\left(\frac{N-n-1}{\lambda^{N-n-1}}, \frac{N-n-1}{\low^{N-n-2}\lambda}\right)$.
Then $(1+\delta_n)^2\frac{w_{n+1}}{w_n}> \frac{\mu_{n+1}}{\mu_n}$.
\end{lmm}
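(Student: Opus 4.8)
Lemma \ref{FTlowstepweak} is the Fock-type-space analogue of Statement \ref{lowstep}: it bounds from below how much the ratio $\frac{\mu_{n+1}}{\mu_n}$ (equivalently $\frac{w_{n+1}}{w_n}$ after one Stieltjes step) can decrease. The plan is to mimic the proof of Statement \ref{lowstep} exactly, tracking the one structural difference: in the Fock case the lacunarity of $\{\mu_k\}$ is not uniform but grows like $\left(\frac{t_{n+1}}{t_n}\right)^{2N-2n-1}$, so the effective lacunarity parameter $\hat{\low}$ improves as $n$ decreases and the error terms must be collected into the factor $(1+\delta_n)^2$.

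Let me think through the steps. Writing down the two identities in \eqref{root} for $s_n$ and $s_{n+1}$, I would start from the relation $\left(\smm_{k=1}^N \frac{\mu_k}{(t_k-s_{n+1})^2}\right)\frac{w_{n+1}}{w_n}=\smm_{k=1}^N \frac{\mu_k}{(t_k-s_n)^2}$, which is the defining identity for the $w$'s. Exactly as in Statement \ref{lowstep}, the terms with $k<n$ and the $k=n+1$ term are handled by the crude bounds $\frac{s_{n+1}^2}{s_n^2}>\lambda$ (from Statement \ref{lacun}) and the effective lacunarity $\frac{\mu_{n+1}}{\mu_n}>\left(1-\frac{10}{\lambda}\right)\low\left(\frac{t_{n+1}}{t_n}\right)^{2N-2n-1}$; these contributions are absorbed because the local lacunarity is so large. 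The substantive content is again inequality \eqref{goal}, whose main right-hand term is $\frac{\mu_{n+1}^2}{(s_{n+1}-t_{n+1})^2}$. I would reuse the subtraction identity \eqref{mainEqality} and the decomposition into $\mathfrak{I_1}-\mathfrak{I_2}+\mathfrak{I_3}+\mathfrak{I_4}$, then verify the Fock-case analogues of \eqref{left} and \eqref{right}. Here the tail $\mathfrak{I_4}$ and the head $\mathfrak{I_3}$ pick up the $N-n$ factor, and the slow-growth estimate \eqref{slowgrowth} (which the excerpt states still holds in the Fock case via Lemma \ref{bound}) controls how $s_k$ deviates from $t_k$.

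The key difference from Statement \ref{lowstep} is that one cannot simply assert $\mathfrak{I_2}>2(\mathfrak{I_3}+\mathfrak{I_4})$ and discard the error; instead I would carry a multiplicative slack $(1+\delta_n)$ through each comparison. The two sources of error are the deviation $\frac{s_k-t_k}{t_k}$ governed by \eqref{slowgrowth}, which contributes a term of size $\frac{N-n-1}{\lambda^{N-n-1}}$, and the summation of the $\mathfrak{I_4}$-type tail over the remaining $N-n-1$ indices against the rapidly growing local lacunarity $\low$, contributing $\frac{N-n-1}{\low^{N-n-2}\lambda}$. Taking $\delta_n=4\max\left(\frac{N-n-1}{\lambda^{N-n-1}},\frac{N-n-1}{\low^{N-n-2}\lambda}\right)$ dominates both, and squaring accounts for the fact that the error enters both the $s_{n+1}-t_{n+1}$ localization and the ratio comparison, exactly paralleling the $(1+\eps_n)^2$ bookkeeping already used in Statement \ref{upstep}.

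The main obstacle I anticipate is the bookkeeping of the $n$-dependent exponents: because the local lacunarity $\left(\frac{t_{n+1}}{t_n}\right)^{2N-2n-1}$ changes from step to step, the geometric-series bounds that were uniform in Statement \ref{lowstep} now produce the $N-n$ prefactors, and one must check that these are genuinely absorbed by $\delta_n$ rather than accumulating. The delicate point is the tail estimate feeding \eqref{right}: one needs $\frac{\mu_{n+1}}{t_{n+1}^2}$ to dominate $\frac{\mu_{n+2}}{t_{n+2}^2}$ with enough room, and in the Fock setting this ratio is controlled by $\theta$ together with the growing exponent, so the verification that $\theta<\frac{1}{10}$ still suffices after the change of variables $\tau_k=r_k^{-1}$, $\alpha_k=\spd_k r_k^{-1}$ is where the computation must be done carefully. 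Once those two comparisons are established with the $(1+\delta_n)$ slack, the lemma follows by the same algebraic rearrangement of \eqref{goal} as before.
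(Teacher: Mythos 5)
Your plan to transplant the proof of Statement \ref{lowstep}, keeping its comparisons and merely carrying a multiplicative slack $(1+\delta_n)$ through them, breaks down at exactly the two places where that proof uses the hypothesis $\frac{\mu_{k+1}}{t_{k+1}^2}<\theta\frac{\mu_k}{t_k^2}$, because in the Fock-type setting this hypothesis is not weakened but \emph{reversed}. After the change of variables $\mu_k=\alpha_{N-k+1}$, $t_k=\tau_{N-k+1}$ one has $\frac{\mu_{n+1}}{\mu_n}>\left(1-\frac{10}{\lambda}\right)\low\left(\frac{t_{n+1}}{t_n}\right)^{2N-2n-1}$, hence $\frac{\mu_{n+1}}{t_{n+1}^2}>\left(1-\frac{10}{\lambda}\right)\low\left(\frac{t_{n+1}}{t_n}\right)^{2N-2n-3}\frac{\mu_n}{t_n^2}$, so the sequence $\left\{\frac{\mu_k}{t_k^2}\right\}$ is rapidly \emph{increasing} for $n\le N-2$ (this is exactly the observation $\loctheta>\lambda$ made in Lemma \ref{FTupstep}). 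Two consequences. First, the reduction used in Statement \ref{lowstep} for $k<n$, namely $\frac{\mu_k}{(s_n-t_k)^2}>\frac{\mu_{n+1}}{\mu_n}\cdot\frac{\mu_k}{(s_{n+1}-t_k)^2}$, requires $\frac{(s_{n+1}-t_k)^2}{(s_n-t_k)^2}>\frac{\mu_{n+1}}{\mu_n}$; the left side is of order $\left(\frac{t_{n+1}}{t_n}\right)^2$ while the right side is at least $\low\left(\frac{t_{n+1}}{t_n}\right)^{2N-2n-1}$, so the huge local lacunarity works \emph{against} you here, and the head terms $k<n$ must instead stay in the remainder, as they do in \eqref{FTlsweakmain}. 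Second, there is no Fock analogue of \eqref{right}: the tail $\mathfrak{I_4}=\smm_{l=n+2}^N\frac{\mu_l(t_{n+1}-t_n)}{(t_l-s_{n+1})^2}$ is now dominated by its \emph{last} term, of size $t_{n+1}\frac{\mu_N}{t_N^2}$, while $\mathfrak{I_2}$ is of order $\frac{\mu_{n+1}}{t_{n+1}}$; since $\frac{\mu_{n+1}}{t_{n+1}^2}\ll\frac{\mu_N}{t_N^2}$, one gets $\mathfrak{I_4}\gg\mathfrak{I_2}$, and no slack of size $(1+\delta_n)$ with $\delta_n$ small can repair a comparison that fails by such a factor. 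Your proposed key verification (``$\frac{\mu_{n+1}}{t_{n+1}^2}$ dominates $\frac{\mu_{n+2}}{t_{n+2}^2}$, and $\theta<\frac{1}{10}$ still suffices after the change of variables'') asserts precisely the inequality that is false in this regime.

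The paper's proof therefore changes the mechanism rather than the bookkeeping: it discards $\mathfrak{I_2}\ge 0$ from \eqref{keyineq} altogether and anchors all error estimates at the top scale $\frac{\mu_N}{t_N}$. Concretely, it shows $\mathfrak{I_3}+\mathfrak{I_4}<\frac{\delta_n}{2}\cdot\frac{\mu_N}{t_N}$ — both terms in the max defining $\delta_n$ arise from the single tail bound $\smm_{l=n+2}^{N}\frac{\mu_l}{t_l\lambda^{l-n-1}}<\frac{\mu_N}{t_N}\,(N-n-1)\max\left(\lambda^{-(N-n-1)},\low^{-(N-n-2)}\lambda^{-1}\right)$, not from \eqref{slowgrowth} — and then invokes Statement \ref{approx} to get $\frac{\mu_n}{s_n-t_n}>\frac{\mu_N}{t_N}$, so that the slack term $\delta_n\frac{\mu_n}{s_n-t_n}$ alone absorbs $\mathfrak{I_3}+\mathfrak{I_4}$ in the difference factor. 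The argument then closes because $(\mathfrak{I_3}+\mathfrak{I_4})\cdot\frac{\mu_{n+1}}{t_{n+1}-t_n}$ reproduces exactly the remainder on the right of \eqref{FTlsweakmain}. In short, $\delta_n$ is not slack carried through the old comparisons of \eqref{left} and \eqref{right}; it is the \emph{replacement} for $\mathfrak{I_2}$, and it works only because of the lower bound from Statement \ref{approx}. Without this shift your outline cannot be completed.
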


\begin{proof}

Since $\frac{\mu_{n+1}}{(t_{n+1}-s_{n})^2}>\frac{\mu_{n+1}}{\mu_{n}}\cdot\frac{\mu_{n}}{(s_{n+1}-t_{n})^2}$,
it is sufficient to prove that  
\begin{multline}\label{FTlsweakmain}
\left(\frac{(1+\delta_n)\mu_n}{s_n-t_n}-\dfrac{\mu_{n+1}}{s_{n+1}-t_{n+1}}\right)
\left(\frac{(1+\delta_n)\mu_n}{s_n-t_n}+\dfrac{\mu_{n+1}}{s_{n+1}-t_{n+1}}\right)\\
>\smm_{k \neq n, n+1} \dfrac{\mu_{n+1}\mu_k}{(s_{n+1}-t_k)^2}.
\end{multline}

Now, from \eqref{keyineq} we have
\begin{equation*}
\dfrac{\mu_{n+1}}{s_{n+1}-t_{n+1}}<
\dfrac{\mu_n}{s_n-t_n}
+\dfrac{\sum_{k=1}^{n-1}\mu_k(t_{n+1}-t_n)}{(t_{n+1}-t_{n-1})(t_n-t_{n-1})}+\smm_{l=n+2}^N \dfrac{\mu_l(t_{n+1}-t_n)}{(t_l-s_{n+1})^2}.
\end{equation*}


Note that
\begin{gather*}
\dfrac{\sum_{k=1}^{n-1}\mu_k(t_{n+1}-t_n)}{(t_{n+1}-t_{n-1})(t_n-t_{n-1})}
<\left(\dfrac{\lambda}{\lambda-1}\right)^3 \cdot\dfrac{\mu_{n-1}}{t_n}
<\left(\dfrac{\lambda}{\lambda-1}\right)^3 \cdot \frac{\mu_N}{\lambda \low^{N-n} t_{N}}.
\end{gather*}
%
Moreover,  $\smm_{l=n+2}^N \dfrac{\mu_l(t_{n+1}-t_n)}{(t_l-s_{n+1})^2}<\left(\frac{\lambda}{\lambda-2}\right)^2 \sum_{l=n+2}^{N}\frac{\mu_l t_{n+1}}{t_l^2}.$
Notice that
\begin{equation*}
\sum_{l=n+2}^{N}\frac{\mu_l t_{n+1}}{t_l^2}<\sum_{l=n+2}^{N}\frac{\mu_l}{t_l \lambda^{l-n-1}}
<\frac{\mu_N}{t_N} \max\left(\dfrac{N-n-1}{\lambda^{N-n-1}}, \dfrac{N-n-1}{\low^{N-n-2}\lambda}\right).
\end{equation*}

Then
\begin{gather*}
\frac{(1+\delta_n)\mu_n}{s_n-t_n}-\dfrac{\mu_{n+1}}{s_{n+1}-t_{n+1}}>\frac{\delta_n\mu_n}{s_n-t_n}-\frac{\delta_n}{2}\frac{\mu_N}{t_N}.
\end{gather*}
From Statement \ref{approx}, $\frac{\mu_n}{s_n-t_n}>\frac{\mu_N}{t_N}.$
Then we get
$$\frac{(1+\delta_n)\mu_n}{s_n-t_n}-\dfrac{\mu_{n+1}}{s_{n+1}-t_{n+1}}
>\dfrac{\sum_{k=1}^{n-1}\mu_k(t_{n+1}-t_n)}{(t_{n+1}-t_{n-1})(t_n-t_{n-1})}+\smm_{l=n+2}^N \dfrac{\mu_l(t_{n+1}-t_n)}{(t_l-s_{n+1})^2}.$$
Furthermore, $\frac{(1+\delta_n)\mu_n}{s_n-t_n}+\frac{\mu_{n+1}}{s_{n+1}-t_{n+1}}>\frac{\mu_{n+1}}{t_{n+1}-t_{n}}.$ These two inequalities imply \eqref{FTlsweakmain}.
\end{proof}

\begin{lmm}
\label{FTupstep}
Let $\loctheta = \frac{\mu_{n+1}t_{n}^2}{\mu_{n}t_{n+1}^2}$.
Then $\frac{w_{n+1}s_{n}^2}{w_{n}s_{n+1}^2}<\left(1+\eps_n\right)^3 \loctheta$, where $\eps_n = 5(\low-1)^{1+n-N}$.
\end{lmm}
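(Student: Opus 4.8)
The plan is to run the proof of Statement~\ref{upstep} essentially verbatim, feeding in the stronger Fock-type growth $\frac{\mu_{n+1}}{\mu_n}>\left(1-\frac{10}{\lambda}\right)\low\left(\frac{t_{n+1}}{t_n}\right)^{2N-2n-1}$ in place of the plain lacunarity $\frac{\mu_{n+1}}{\mu_n}>\low\frac{t_{n+1}}{t_n}$. First I recast the claim through \eqref{root}: since $\frac{1}{w_n}=\smm_{k=1}^N\frac{\mu_k}{(t_k-s_n)^2}$, the inequality $\frac{w_{n+1}s_n^2}{w_ns_{n+1}^2}<(1+\eps_n)^3\loctheta$ is exactly \eqref{goallessstrong},
\[
\smm_{k=1}^N\frac{\mu_ks_n^2}{(t_k-s_n)^2}<(1+\eps_n)^3\loctheta\smm_{k=1}^N\frac{\mu_ks_{n+1}^2}{(t_k-s_{n+1})^2},
\]
now with the floor-free $\loctheta=\frac{\mu_{n+1}t_n^2}{\mu_nt_{n+1}^2}$. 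As in Statement~\ref{upstep} I separate the main term ($k=n$ on the left, $k=n+1$ on the right) from the remainder and treat the two comparisons in turn.

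\textbf{Main terms.} I reduce $\frac{\mu_ns_n^2}{(s_n-t_n)^2}<(1+\eps_n)^2\loctheta\frac{\mu_{n+1}s_{n+1}^2}{(s_{n+1}-t_{n+1})^2}$ to its square root and, after inserting $\loctheta$ and cancelling, to
\[
\frac{\mu_n}{s_n-t_n}-\frac{\mu_{n+1}}{s_{n+1}-t_{n+1}}<\frac{\mu_{n+1}}{t_{n+1}}-\frac{\mu_n}{t_n}+\eps_n\frac{\mu_{n+1}}{s_{n+1}-t_{n+1}},
\]
the same reduction as in Statement~\ref{upstep}. Substituting the identity \eqref{upperRootDiffer} (a pure consequence of \eqref{root}, hence still valid) and discarding the negative cross-sum, this collapses to
\[
\frac{\mu_ns_{n+1}}{(s_{n+1}-t_n)t_n}+\frac{\mu_{n+1}s_n}{(t_{n+1}-s_n)t_{n+1}}<\eps_n\frac{\mu_{n+1}}{s_{n+1}-t_{n+1}}.
\]
Here \eqref{slowgrowth}, which remains available in the relabelled variables with effective lacunarity at least $\low$, controls $s_n/t_n$ and $s_{n+1}/t_{n+1}$, while the super-geometric growth makes the left-hand side negligible; the exponent $1+n-N$ in $\eps_n=5(\low-1)^{1+n-N}$ is read off from the decay rate that \eqref{slowgrowth} supplies at this position.

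\textbf{Remainders.} Following Statement~\ref{upstep}, it suffices to prove
\[
\smm_{k\neq n}\frac{\mu_ks_n^2}{(t_k-s_n)^2}<\loctheta\left(\frac{\mu_ns_{n+1}^2}{(s_{n+1}-t_n)^2}+\eps_n\frac{\mu_{n+1}s_{n+1}^2}{(s_{n+1}-t_{n+1})^2}\right),
\]
which I split into the lower tail $k<n$ and the upper tail $k>n$. The lower tail goes exactly as before, through $\loctheta\mu_n>2\mu_{n-1}$ — the stronger growth only helps here. The upper tail is where I must dispense with the auxiliary floor $\loctheta\geq\frac{1}{\lambda}$ used in Statement~\ref{upstep}: I bound $\smm_{k>n}\frac{\mu_ks_n^2}{(t_k-s_n)^2}$ by its leading term $\sim\frac{\mu_{n+1}s_n^2}{t_{n+1}^2}$ via the geometric decay of $\frac{\mu_k}{t_k^2}$ together with \eqref{slowgrowth}, and then check
\[
\frac{\mu_{n+1}s_n^2}{t_{n+1}^2}\lesssim\loctheta\,\eps_n\frac{\mu_{n+1}s_{n+1}^2}{(s_{n+1}-t_{n+1})^2}
\]
straight from the Fock-type growth. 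After cancellation this becomes a bound of the shape $(s_{n+1}-t_{n+1})^2\lesssim\eps_n\frac{\mu_{n+1}}{\mu_n}t_{n+1}^2$, whose right-hand side is comfortably large because $\frac{\mu_{n+1}}{\mu_n}$ grows super-geometrically, while $s_{n+1}-t_{n+1}$ is tiny by Statement~\ref{approx}. Combining the two tails with the main-term estimate yields the lemma.

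The main obstacle is exactly this upper-tail comparison: in Statement~\ref{upstep} it was cushioned by the floor $\loctheta\geq\frac{1}{\lambda}$, whereas here it must close using the genuine value of $\loctheta$. The saving grace is that the Fock exponents $2N-2n\pm1$ force $\frac{\mu_{n+1}}{\mu_n}$ to dwarf $\frac{\mu_n}{\mu_{n-1}}$ and make $s_{n+1}-t_{n+1}$ exponentially small, so the floor is in fact unnecessary; the only delicate point is tracking the resulting power of $(\low-1)$ precisely enough to land on the stated $\eps_n=5(\low-1)^{1+n-N}$.
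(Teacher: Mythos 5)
Your Step 1 (main terms, via \eqref{upperRootDiffer} and \eqref{slowgrowth}) and your lower tail $k<n$ match the paper's argument and are fine. The genuine gap is in your upper tail. You propose to bound $\smm_{k>n}\frac{\mu_k s_n^2}{(t_k-s_n)^2}$ by its leading term $\sim\frac{\mu_{n+1}s_n^2}{t_{n+1}^2}$ ``via the geometric decay of $\frac{\mu_k}{t_k^2}$'' --- but in the relabelled Fock-type data there is no such decay. The growth hypothesis $\frac{\mu_{k+1}}{\mu_k}>\left(1-\frac{10}{\lambda}\right)\low\left(\frac{t_{k+1}}{t_k}\right)^{2N-2k-1}$ gives, for $k\le N-2$,
$$\frac{\mu_{k+1}/t_{k+1}^2}{\mu_k/t_k^2}>\left(1-\frac{10}{\lambda}\right)\low\left(\frac{t_{k+1}}{t_k}\right)^{2N-2k-3}>\lambda,$$
so the sequence $\frac{\mu_k}{t_k^2}$ \emph{increases} super-geometrically. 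Consequently $\smm_{k>n}\frac{\mu_ks_n^2}{(t_k-s_n)^2}$ is dominated by its \emph{last} terms ($k$ near $N$), not by $k=n+1$, and your bound under-estimates it by a factor of order $\mu_N t_{n+1}^2/(\mu_{n+1}t_N^2)$, which is enormous. The tail inequality you are aiming for is in fact true, but proving it requires quantifying $s_{n+1}-t_{n+1}\approx \mu_{n+1}t_N/\mu_N$ via Statement \ref{approx} and comparing $\frac{\mu_N s_n^2}{t_N^2}$ against the right-hand side --- a different computation from the one you describe, so as written the step fails.

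You also missed the simple observation that the paper's proof opens with and that makes the floor issue evaporate: the displayed inequality shows $\loctheta>\lambda>1$ here (the opposite regime from Statement \ref{upstep}, where $\loctheta$ could be small and a floor $1/\lambda$ was needed). Once $\loctheta>1$ is known, the terms $k\ge n+2$ need not be absorbed into $\eps_n$ times the main term at all: the paper compares them \emph{termwise} with the corresponding terms of the full right-hand sum in \eqref{FTgoallessstrong}, using the trivial monotonicity $\frac{s_n}{t_k-s_n}<\frac{s_{n+1}}{t_k-s_{n+1}}$ for $s_n<s_{n+1}<t_k$, which gives $\frac{\mu_ks_n^2}{(t_k-s_n)^2}<\loctheta\frac{\mu_ks_{n+1}^2}{(t_k-s_{n+1})^2}$ with no decay assumption whatsoever; only the single $k=n+1$ term is absorbed into $\loctheta\eps_n$ times the main term. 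Your plan keeps the decomposition \eqref{localgoalweak} of Statement \ref{upstep} and pushes the entire tail into the $\eps_n$-term by appealing to a decay that does not exist in this setting; that is the step you would need to repair.
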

\begin{proof}
Note that $\loctheta > \lambda$, since $\frac{\mu_{n+1}}{\mu_n}>\frac{t_{n+1}^3}{t_n^3}$ for $n\leq N-2$.

We want to prove the following inequality
\begin{equation}
\label{FTgoallessstrong}
\smm_{k=1}^N \dfrac{\mu_ks_n^2}{(t_k-s_n)^2}<\left(1+\eps_n\right)^3\loctheta\smm_{k=1}^N \dfrac{\mu_ks_{n+1}^2}{(t_k-s_{n+1})^2}.
\end{equation}

\textbf{Step 1.} First, we compare the main terms.
We are going to show that
\begin{equation}
\label{FTmainlessstrong}
\dfrac{\mu_n s_n^2}{(s_n-t_n)^2}<\left(1+\eps_n\right)^2\loctheta \dfrac{\mu_{n+1}s_{n+1}^2}{(s_{n+1}-t_{n+1})^2}.
\end{equation}
It is sufficient to prove that
\begin{equation*}
\dfrac{\mu_n^2 s_n^2}{t_n^2(s_n-t_n)^2}<\left(1+\eps_n\right)^2\dfrac{\mu_{n+1}^2s_{n+1}^2}{t_{n+1}^2(s_{n+1}-t_{n+1})^2},
\end{equation*}
which will follow from
\begin{equation*}
\dfrac{\mu_n}{s_n-t_n}-\dfrac{\mu_{n+1}}{s_{n+1}-t_{n+1}}<\dfrac{\mu_{n+1}}{t_{n+1}}-\dfrac{\mu_n}{t_n}+\eps_n\cdot\dfrac{\mu_{n+1}}{s_{n+1}-t_{n+1}}.
\end{equation*}
We use \eqref{upperRootDiffer}.
Let us prove that the right-hand side in \eqref{upperRootDiffer} is smaller than
$\frac{\mu_{n+1}}{t_{n+1}}-\frac{\mu_n}{t_n}+\eps_n\cdot\frac{\mu_{n+1}}{s_{n+1}-t_{n+1}}$. So, we want to show that 
\begin{equation*}
\frac{\mu_ns_{n+1}}{(s_{n+1}-t_n)t_n}+\frac{\mu_{n+1}s_n}{(t_{n+1}-s_n)t_{n+1}}
<\eps_n\cdot\dfrac{\mu_{n+1}}{s_{n+1}-t_{n+1}}+\smm_{k\neq n, n+1}\frac{\mu_k(s_{n+1}-s_{n})}{(s_{n+1}-t_k)(s_n-t_k)}.
\end{equation*}
From \eqref{slowgrowth} we see that both terms on the left-hand side are less than $\frac{\eps_n}{2}\cdot\frac{\mu_{n+1}}{s_{n+1}-t_{n+1}}$.
Therefore, we have proved inequality \eqref{FTmainlessstrong}.  
\vspace{1pt}

\textbf{Step 2.} In order to prove \eqref{FTgoallessstrong} it is sufficient to estimate $\sum_{k\neq n} \frac{\mu_ks_n^2}{(t_k-s_n)^2}$.

We have
\begin{equation*}
\dfrac{\mu_{n+1} s_n^2}{(t_{n+1}-s_n)^2}
<\loctheta\eps_n\dfrac{\mu_{n+1}s_{n+1}^2}{(s_{n+1}-t_{n+1})^2};
\end{equation*}
\begin{equation*}
\dfrac{\mu_ks_n^2}{(t_k-s_n)^2}
<\loctheta\dfrac{\mu_{k} s_{n+1}^2}{(t_{k}-s_{n+1})^2}, \qquad k \geq n+2.
\end{equation*}

%
Also,
\begin{equation*}
\smm_{k=1}^{n-1} \dfrac{\mu_ks_n^2}{(t_k-s_n)^2}
<\frac{\mu_{n-1}}{(1-\frac{1}{\lambda})^3}
<\loctheta\frac{\mu_{n}s_{n+1}^2}{(s_{n+1}-t_n)^2}.
\end{equation*}
This proves \eqref{FTgoallessstrong}.
\end{proof}

\begin{crl} \label{FTmainCrl}
We have $r_{n+1}^{(1)}>\lambda r_{n}^{(1)}$ and 
$$\left(1-\frac{20n}{\low^n}\right)\frac{\spd_{n+1} r_{n+2}^2 }{\spd_{n+2} r_{n+1}^2}
<\frac{\spd_{n}^{(1)}}{\spd_{n+1}^{(1)}}
< \left(1+\frac{20}{\low^n}\right) \frac{\spd_{n+1}r_{n+2}^2}{\spd_{n+2}r_{n+1}^2}.$$
\end{crl}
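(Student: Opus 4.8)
The plan is to transfer everything to the data $\{w_k,s_k\}$ produced by one step of the Stieltjes algorithm applied to $\{\mu_k,t_k\}$, and then read the corollary off Lemmas \ref{FTlowstepweak} and \ref{FTupstep}. First I would record the two translation formulas already isolated in this subsection: the new poles satisfy $r_n^{(1)}=1/\tau^{(1)}_{N-n}$, which the root localization (Statement \ref{approx}) and the slow-growth estimate \eqref{slowgrowth} identify with $1/s_{N-n}$, and the new weights satisfy $\spd_n^{(1)}\approx w_{N-n}/s_{N-n}^3$ with a controlled error. Writing $m=N-n-1$, this turns the quantity to be estimated into $\frac{\spd_n^{(1)}}{\spd_{n+1}^{(1)}}\approx\frac{w_{m+1}}{w_m}\cdot\frac{s_m^3}{s_{m+1}^3}$, while the right-hand side $\frac{\spd_{n+1}r_{n+2}^2}{\spd_{n+2}r_{n+1}^2}$, after substituting $\spd_j=\mu_{N-j+1}/t_{N-j+1}$ and $r_j=1/t_{N-j+1}$, becomes an expression in the original data $\{\mu_k,t_k\}$ indexed by the same $m$.

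The lacunarity $r_{n+1}^{(1)}>\lambda r_n^{(1)}$ is the easy half: it is exactly the inequality $s_{N-n}>\lambda s_{N-n-1}$, i.e.\ Statement \ref{lacun} applied to the poles $s_k$ and transported through $r^{(1)}=1/s$ with the order reversed. Since Statement \ref{lacun} holds for the true poles, no error term enters and the bound is clean.

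For the two-sided estimate of $\frac{\spd_n^{(1)}}{\spd_{n+1}^{(1)}}$ I would split $\frac{w_{m+1}}{w_m}$ in the two complementary ways supplied by the lemmas. The upper bound (the right inequality, with error $1+20\low^{-n}$) comes from Lemma \ref{FTupstep}: writing $\frac{w_{m+1}}{w_m}=\frac{w_{m+1}s_m^2}{w_m s_{m+1}^2}\cdot\frac{s_{m+1}^2}{s_m^2}$ and bounding the first factor by $(1+\eps_m)^3\loctheta_m$ with $\loctheta_m=\mu_{m+1}t_m^2/(\mu_m t_{m+1}^2)$; the point is that $\eps_m=5(\low-1)^{1+m-N}=5(\low-1)^{-n}$ carries no combinatorial factor, which is exactly why the upper error is $20\low^{-n}$ and not $20n\low^{-n}$. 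The lower bound (the left inequality, with error $1-20n\low^{-n}$) comes from Lemma \ref{FTlowstepweak}, $(1+\delta_m)^2\frac{w_{m+1}}{w_m}>\frac{\mu_{m+1}}{\mu_m}$, whose error $\delta_m=4\max\!\big(n\lambda^{-n},\,n\low^{-(n-1)}\lambda^{-1}\big)$ contains the factor $N-m-1=n$ and hence produces precisely the extra $n$. In both directions I would convert the surviving powers of $s_m/s_{m+1}$ into powers of $t_m/t_{m+1}$ (equivalently $r_{n+1}/r_{n+2}$) by root localization, so that the leading terms collapse onto $\frac{\spd_{n+1}r_{n+2}^2}{\spd_{n+2}r_{n+1}^2}$.

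The part I expect to be the real work is the error bookkeeping, not the idea. Three sources of multiplicative error must be combined and shown to telescope into $1\pm O(\low^{-n})$: the factors $(1+\delta_m)^2$ and $(1+\eps_m)^3$ from the two lemmas, and the $1+O(\low^{-(N-j)})$ errors from replacing $\tau^{(1)}_{N-j}$ by $s_{N-j}$ and from the passage $s_j\to t_j$ in the change-of-variables prefactors. The delicate point is that after the reversal $m=N-n-1$ all of these geometric errors are governed by $\low^{-n}$ (resp.\ $n\low^{-n}$), so one must verify that the powers introduced by $\zeta=z^{-1}$ — the cube $s_{N-n}^3$ in the weight and the squares $r_{n+1}^2,r_{n+2}^2$ in the statement — cancel exactly at leading order, matching $\frac{\spd_n^{(1)}}{\spd_{n+1}^{(1)}}$ to $\frac{\spd_{n+1}r_{n+2}^2}{\spd_{n+2}r_{n+1}^2}$ and leaving only these controlled remainders; keeping the absolute constants below $20$ is then a routine consequence of $\low>1000$ and $\lambda>1000$. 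This mirrors the way \eqref{FTmainIneq} was derived for the original data, which is the natural consistency check to run at the end.
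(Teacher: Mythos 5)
Your proposal reproduces the paper's own proof essentially step for step: the identifications $r_n^{(1)}\approx 1/s_{N-n}$ and $\spd_n^{(1)}\approx w_{N-n}/s_{N-n}^3$, Statement \ref{lacun} for the lacunarity of the new poles, Lemma \ref{FTupstep} for the upper bound (whose error $5(\low-1)^{-n}$ carries no factor of $n$, hence $1+20/\low^n$), Lemma \ref{FTlowstepweak} for the lower bound (whose error $\delta_{N-n-1}$ does carry the factor $n$, hence $1-20n/\low^n$), conversion of $s$ into $t$ by root localization, and finally the limit $N\to\infty$. As far as the route is concerned, it is exactly the paper's.

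There is, however, one concrete problem, and it sits precisely at the step you yourself flagged as ``the real work'': the leading terms do not collapse onto the printed target. Carrying out the substitution with $m=N-n-1$,
\begin{equation*}
\frac{\spd_n^{(1)}}{\spd_{n+1}^{(1)}}
\approx\frac{w_{m+1}}{w_m}\cdot\frac{s_m^3}{s_{m+1}^3}
\approx\frac{\mu_{m+1}}{\mu_m}\cdot\frac{t_m^3}{t_{m+1}^3}
=\frac{\spd_{n+1}/r_{n+1}}{\spd_{n+2}/r_{n+2}}\cdot\frac{r_{n+1}^3}{r_{n+2}^3}
=\frac{\spd_{n+1}r_{n+1}^2}{\spd_{n+2}r_{n+2}^2},
\end{equation*}
where we used $\mu_{m+1}=\alpha_{n+1}=\spd_{n+1}/r_{n+1}$, $\mu_m=\alpha_{n+2}=\spd_{n+2}/r_{n+2}$, $t_{m+1}=1/r_{n+1}$, $t_m=1/r_{n+2}$, the two lemmas for $w_{m+1}/w_m\approx\mu_{m+1}/\mu_m$, and root localization for $s\approx t$. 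This is the ratio with the $r$-factors inverted relative to the Corollary as printed; the two expressions differ by the factor $(r_{n+2}/r_{n+1})^4>\lambda^4$, so the printed lower bound cannot hold (and the printed upper bound holds only vacuously). The inverted form is the intended one: combined with \eqref{FTmainIneq} at index $n+1$ it yields exactly the exponents $2n-2$ and $2n$ appearing in the corollary that follows, whereas the printed form would yield $2n+2$ and $2n+4$. In other words, the statement contains a typo that the paper's own written proof carries along; your reconstruction is faithful to the paper's argument, but the final ``collapse'' you assert would fail as stated, and the target must be corrected to $\spd_{n+1}r_{n+1}^2/(\spd_{n+2}r_{n+2}^2)$ for the bookkeeping you outline to close.
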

\begin{proof}
From Statement \ref{lacun} we know that $s_{k+1}>\lambda s_k$, hence, since $s_{N-n}^{-1}$ approximate $r_n^{(1)}$, we get that $r_{n+1}^{(1)}>\lambda r_{n}^{(1)}$.

In turn, $\spd_{n}^{(1)}$ is approximated by $\dfrac{w_{N-n}}{s_{N-n}^3}$. Moreover,
\begin{equation*}
\dfrac{w_{N-n}s_{N-n-1}^3}{w_{N-n-1}s_{N-n}^3}
>(1-4\delta_{N-n-1})^2 \frac{\mu_{N-n}s_{N-n-1}^3}{\mu_{N-n-1}s_{N-n}^3}
> \left(1-\frac{20n}{\low^n}\right)\frac{\spd_{n+1} r_{n+2}^2 }{\spd_{n+2} r_{n+1}^2};
\end{equation*}
\begin{equation*}
\dfrac{w_{N-n}s_{N-n-1}^3}{w_{N-n-1}s_{N-n}^3}
< (1+\varepsilon_{N-n-1})^3 \frac{\mu_{N-n}t_{N-n-1}^2 s_{N-n-1}}{\mu_{N-n-1}t_{N-n}^2s_{N-n}} 
< \left(1+\frac{20}{\low^n}\right) \frac{\spd_{n+1}r_{n+2}^2}{\spd_{n+2}r_{n+1}^2}.
\end{equation*}
Now we just take $N\to \infty$.
\end{proof}
Now combine corollary above with \eqref{FTmainIneq}.

\begin{crl} We have
\begin{equation*}
\left(1-\frac{20n^2}{\low^n}\right) \low \left(\frac{r_{n+1}^{(1)}}{r_n^{(1)}}\right)^{2n-2}
<\frac{\spd_{n}^{(1)}}{\spd_{n+1}^{(1)}}
<  \left(1+\frac{20n}{\low^n}\right)\theta \left(\frac{r_{n+1}^{(1)}}{r_n^{(1)}}\right)^{2n}.
\end{equation*}
\end{crl}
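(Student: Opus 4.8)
The plan is to show that one step of the Stieltjes algorithm reproduces an estimate of exactly the same shape as \eqref{FTmainIneq} for the transformed data $\{\spd_k^{(1)}, r_k^{(1)}\}$, so that the whole scheme can be iterated; this final corollary is precisely the inductive step. First I would feed Corollary \ref{FTmainCrl} into \eqref{FTmainIneq}. Corollary \ref{FTmainCrl} already reduces the new ratio $\spd_n^{(1)}/\spd_{n+1}^{(1)}$, up to a factor $1\pm O(\low^{-n})$, to the old ratio $\spd_{n+1}/\spd_{n+2}$ multiplied by a fixed power of the old node ratio $r_{n+2}/r_{n+1}$. Applying \eqref{FTmainIneq} at index $k=n+1$ then bounds $\spd_{n+1}/\spd_{n+2}$ from below by $\low\,(r_{n+2}/r_{n+1})^{2n}$ and from above by $\theta\,(r_{n+2}/r_{n+1})^{2n+2}$, each up to a factor $1\pm 10\lambda^{-1}$.

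Second, I would convert the old node ratio into the new one. The root localization established earlier (Statement \ref{approx}, Lemma \ref{bound} and \eqref{slowgrowth}) shows that in the transformed variables the new roots satisfy $s_k\approx t_k$; since $r_n^{(1)}=1/\tau_{N-n}^{(1)}$ is approximated by $1/s_{N-n}$ and $t_{N-n}=r_{n+1}^{-1}$, one obtains $r_n^{(1)}\approx r_{n+1}$, hence
\begin{equation*}
\frac{r_{n+1}^{(1)}}{r_n^{(1)}}\approx \frac{r_{n+2}}{r_{n+1}},
\end{equation*}
with a relative error that is a power of $\low^{-1}$. Substituting this identity is what makes the exponents collapse in the right way: the extra factor of the node ratio squared carried by Corollary \ref{FTmainCrl}, once rewritten through the displayed relation, lowers the exponents $2n$ and $2n+2$ coming from \eqref{FTmainIneq} by two, producing exactly $2n-2$ in the lower bound and $2n$ in the upper bound, which matches the target.

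Finally, I would assemble the multiplicative constants. The three error factors — $1\pm O(\low^{-n})$ from Corollary \ref{FTmainCrl}, $1\pm 10\lambda^{-1}$ from \eqref{FTmainIneq}, and the node conversion error — must be chained and shown to remain inside the claimed factors $1-20n^2\low^{-n}$ and $1+20n\low^{-n}$. The main obstacle is precisely this bookkeeping: the approximate identity for the node ratio is raised to a power comparable to $2n$, which inflates its relative error by a factor of order $n$; this is what turns the per-step discrepancy of order $n\low^{-n}$ into the $O(n^2\low^{-n})$ appearing in the lower bound, while the upper bound only picks up one extra factor of $n$ relative to Corollary \ref{FTmainCrl}. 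Since $\lambda$ and $\low$ are assumed large, one checks that all these contributions stay absorbed in the stated constants, and combining the two one-sided estimates then yields the corollary.
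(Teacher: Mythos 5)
Your three-step skeleton is exactly the paper's proof: the paper's entire argument for this corollary is the single sentence ``Now combine corollary above with \eqref{FTmainIneq}'', and your expansion (apply \eqref{FTmainIneq} at $k=n+1$, convert the old node ratio $r_{n+2}/r_{n+1}$ into the new one $r_{n+1}^{(1)}/r_n^{(1)}$ via the root localization, then chain the multiplicative errors) is the right way to carry that sentence out. However, two points need attention, one of which is a genuine gap.

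First, your exponent count is internally inconsistent with the way you quote Corollary \ref{FTmainCrl}. The collapse you describe (exponents $2n$, $2n+2$ from \eqref{FTmainIneq} dropping to $2n-2$, $2n$) occurs only if the middle quantity in Corollary \ref{FTmainCrl} is $\dfrac{\spd_{n+1}r_{n+1}^2}{\spd_{n+2}r_{n+2}^2}$, i.e.\ the old weight ratio multiplied by $(r_{n+1}/r_{n+2})^2$. As printed, Corollary \ref{FTmainCrl} has $\dfrac{\spd_{n+1}r_{n+2}^2}{\spd_{n+2}r_{n+1}^2}$, which would \emph{raise} the exponents to $2n+2$ and $2n+4$, and the resulting upper bound would then overshoot the target by at least $\lambda^4$, so the corollary could not follow at all. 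Tracing the proof of Corollary \ref{FTmainCrl} shows your reading is the correct one: since $\mu_{N-n}=\spd_{n+1}/r_{n+1}$, $\mu_{N-n-1}=\spd_{n+2}/r_{n+2}$, $s_{N-n}\approx 1/r_{n+1}$ and $s_{N-n-1}\approx 1/r_{n+2}$, one has
\begin{equation*}
\frac{\mu_{N-n}}{\mu_{N-n-1}}\cdot\frac{s_{N-n-1}^3}{s_{N-n}^3}
\approx \frac{\spd_{n+1}r_{n+2}}{\spd_{n+2}r_{n+1}}\cdot\frac{r_{n+1}^3}{r_{n+2}^3}
=\frac{\spd_{n+1}r_{n+1}^2}{\spd_{n+2}r_{n+2}^2},
\end{equation*}
so the displayed ratio in that corollary is inverted (a misprint). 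A complete write-up must say this explicitly rather than use the corrected form silently while citing the printed one.

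Second, and more seriously, the final bookkeeping cannot close as you claim. \eqref{FTmainIneq} carries the \emph{fixed} factors $1\pm\frac{10}{\lambda}$, coming from the cross products $\prod_{l\neq k}(1-r_k/r_l)^{-2}$, and these do not decay in $k$. A fixed relative error of size $10/\lambda$ cannot be absorbed into the factors $1-\frac{20n^2}{\low^n}$ and $1+\frac{20n}{\low^n}$, which tend to $1$ as $n\to\infty$: for all large $n$ the chain of inequalities you propose is strictly weaker than the stated bounds. What the combination honestly yields is the displayed estimate with $\low$ replaced by $\left(1-\frac{10}{\lambda}\right)\low$ and $\theta$ by $\left(1+\frac{10}{\lambda}\right)\theta$; that weaker form is also all that the subsequent corollary (the bounds $\frac{1}{10}\low$ and $10\theta$ for every $l$) actually needs, provided \eqref{FTmainIneq} is invoked once rather than at every step of the iteration. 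This defect is inherited from the paper — its one-line proof has the same problem — but your proposal makes the false claim explicit (``one checks that all these contributions stay absorbed in the stated constants''), so in your version it is a concrete step that fails.
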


\begin{crl} For evety $l>0$,
\begin{equation*}
\frac{1}{10} \low \left(\frac{r_{n+1}^{(l)}}{r_n^{(l)}}\right)^{2n-2}
<\frac{\spd_{n}^{(l)}}{\spd_{n+1}^{(l)}}
<  10 \theta \left(\frac{r_{n+1}^{(l)}}{r_n^{(l)}}\right)^{2n}.
\end{equation*}
\end{crl}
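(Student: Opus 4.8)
The plan is to argue by induction on $l$, treating the two corollaries that precede the statement as a single-step transition rule. The base case $l=1$ is exactly the corollary stated just above (the one with the factors $1-\frac{20n^2}{\low^n}$ and $1+\frac{20n}{\low^n}$): one only needs the elementary observation that for every $n\geq 1$ one has $\frac{1}{10}<1-\frac{20n^2}{\low^n}$ and $1+\frac{20n}{\low^n}<10$. Both quantities $\frac{20n^2}{\low^n}$ and $\frac{20n}{\low^n}$ are maximal at $n=1$, where they equal $\frac{20}{\low}<\frac{1}{50}$ since $\low>1000$, so the claim for $l=1$ holds with a large margin.

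For the inductive step I would apply the whole one-step analysis of this section — Lemmas \ref{FTlowstepweak}, \ref{FTupstep} and Corollary \ref{FTmainCrl} — not to the initial data but to the step-$l$ data $\{\spd_k^{(l)}, r_k^{(l)}\}$. To justify this I must first verify that the step-$l$ data still meets the structural hypotheses entering that derivation: the pole lacunarity $r_{n+1}^{(l)}>\lambda r_n^{(l)}$, propagated by the first assertion of Corollary \ref{FTmainCrl} (which rests on Statement \ref{lacun}), together with a ratio bound of the same shape as \eqref{FTmainIneq}, which is precisely the inductive hypothesis (now with the degraded constants $\frac{1}{10}\low$ and $10\theta$). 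Re-running Corollary \ref{FTmainCrl} on this data expresses $\frac{\spd_n^{(l+1)}}{\spd_{n+1}^{(l+1)}}$ through $\frac{\spd_{n+1}^{(l)}(r_{n+2}^{(l)})^2}{\spd_{n+2}^{(l)}(r_{n+1}^{(l)})^2}$ up to a factor $1\pm O\!\left(\low^{-n}\,\mathrm{poly}(n)\right)$, and inserting the inductive bound for $\frac{\spd_{n+1}^{(l)}}{\spd_{n+2}^{(l)}}$ produces the step-$(l+1)$ estimate.

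The crucial point — and the reason the constants do not degenerate to $0$ and $\infty$ as $l$ grows — is the index shift: in Corollary \ref{FTmainCrl} the single-step correction for the ratio at position $n$ refers to the input ratio at position $n+1$, and this is mirrored at the level of the poles by $r_n^{(l+1)}=(1+O(\low^{-1}))\,r_{n+1}^{(l)}$, coming from $s_{N-n}\approx t_{N-n}$ in Statement \ref{approx} and Lemma \ref{bound}. Hence, unrolling the recursion all the way back to the initial data, the bound on $\frac{\spd_n^{(l)}}{\spd_{n+1}^{(l)}}$ accumulates one correction factor at each of the positions $n, n+1, \dots, n+l-1$, rather than $l$ factors at the fixed position $n$. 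The resulting product is dominated by $\prod_{m\geq n}\left(1+\frac{Cm^2}{\low^m}\right)$, whose logarithm is at most $\sum_{m\geq 1}\frac{Cm^2}{\low^m}<\frac{1}{20}$ for $\low>1000$; thus the accumulated constant stays inside $\left[\frac{1}{10},10\right]$ uniformly in $l$, and the induction closes.

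The step I expect to be the main obstacle is the bookkeeping that converts the telescoping product of input ratios $\frac{(r_{n+2}^{(l)})^2}{(r_{n+1}^{(l)})^2}$ collected over the steps back into the single clean power $\left(\frac{r_{n+1}^{(l)}}{r_n^{(l)}}\right)^{2n-2}$ (respectively $\left(\frac{r_{n+1}^{(l)}}{r_n^{(l)}}\right)^{2n}$) displayed in the statement. This forces one to track how the pole sequence $\{r_k^{(j)}\}$ is displaced at each step via $r_n^{(j+1)}=(1+O(\low^{-1}))\,r_{n+1}^{(j)}$, to reconcile the position-dependent exponents $2n-2$ and $2n$ under this reindexing, and to absorb all the $O(\low^{-1})$ pole distortions into the same slack between the sharp one-step constants and the crude constants $\frac{1}{10}, 10$. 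Once this reindexing is organised, the two inequalities follow termwise.
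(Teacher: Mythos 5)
Your proposal is correct and coincides with the paper's (implicit) argument: the paper states this corollary with no proof at all, as the direct iteration of the two preceding corollaries, and your induction on $l$ --- including the essential observation that the correction factors attach to the shifted positions $n, n+1, \ldots, n+l-1$, so that their product converges and stays well inside $[\tfrac{1}{10},10]$ uniformly in $l$ --- is exactly that iteration spelled out. One caveat: the displayed ratio in Corollary \ref{FTmainCrl} is misprinted and should read $\frac{\spd_{n+1}r_{n+1}^2}{\spd_{n+2}r_{n+2}^2}$ (as its proof via $\spd_n^{(1)}\approx w_{N-n}/s_{N-n}^3$ and the subsequent corollary both require); if you use the printed form $\frac{\spd_{n+1}r_{n+2}^2}{\spd_{n+2}r_{n+1}^2}$ literally, your upper-bound step is off by a factor $\bigl(r_{n+2}^{(l)}/r_{n+1}^{(l)}\bigr)^{4}$, whereas with the corrected form your exponent bookkeeping closes cleanly.
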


\subsection{Proof of Theorem \ref{FTTheorem}}
Let $\{\rho_n, q_n\}$ be the entries of the corresponding Jacobi matrix. We write  $\spd_k^{(n)}, r_k^{(n)}$ for the data on the $n$-th step of the Stieltjes algorithm. Then

\begin{equation}
\label{FTJacobiEntries}
q_n= \sum_{k\geq 1}\spd_k^{(n)} r_k^{(n)}, \qquad
\rho_n^2 = \sum_{k\geq 1}\spd_k^{(n)} \left( r_k^{(n)}\right)^2  - \left(\sum_{k\geq 1}\spd_k^{(n)}  r_k^{(n)} \right)^2.
\end{equation}

Using  \eqref{FTspd},  \eqref{FTJacobiEntries} and Corollary \ref{FTmainCrl} we get the desired inequalities.

\section{Proof of Canonical Systems Theorems}
\label{ProofCanonTheorems}
\subsection{Proof of Theorem \ref{ThrCanon}}
Because of \eqref{JacCanonDiagFirst}, we have
$\dfrac{\sqrt{1-\scprod_1^2}}{\scprod_1}=q_1l_1 = 1000.$
Since
\begin{equation}
\frac{1}{\scprod}-1<\frac{\sqrt{1-\scprod^2}}{\scprod}<\frac{1}{\scprod},
\end{equation}
We have $\frac{1}{1001}<\scprod_1<\frac{1}{1000}$.

\smallskip

Now we prove Theorem \ref{ThrCanon} by induction on $n$.

\textbf{Induction step.} 
$n\mapsto n+1.$

\textit{Estimates of $l_{n+1}$.}
Because of the induction hypothesis we have
$\frac{1}{\scprod_n}-1<q_nl_n<\frac{2}{\scprod_n}.$
Hence,
$\frac{1-\scprod_n}{q_n}<l_n\scprod_n<\frac{2}{q_n}.$
Using \eqref{JacCanonNonDiag}, we get
\begin{equation}\label{lenIneq}
\frac{q_n^2}{2\rho_n^2}<\frac{l_{n+1}}{l_n}<\frac{q_n^2}{(1-\scprod_n)^2\rho_n^2}.
\end{equation}

\textit{Estimates of $\scprod_{n+1}$.}
Clearly,
$\frac{\sqrt{1-\scprod_n^2}}{\scprod_n}<q_nl_n<2\frac{\sqrt{1-\scprod_n^2}}{\scprod_n}.$
Therefore,
\begin{equation*}
\frac{1}{2}\cdot\frac{\sqrt{1-\scprod_{n+1}^2}}{\scprod_{n+1}}\cdot\frac{\scprod_n}{\sqrt{1-\scprod_n^2}}
<\frac{q_{n+1}l_{n+1}}{q_n l_n}
<\frac{\sqrt{1-\scprod_{n+1}^2}}{\scprod_{n+1}}\cdot\frac{\scprod_n}{\sqrt{1-\scprod_n^2}}+1.
\end{equation*}
Using \eqref{lenIneq},
\begin{equation}
\label{ProofCanSyst2}
\frac{q_{n+1}q_n}{3\rho_n^2}
<\frac{\scprod_{n}}{\scprod_{n+1}}
<\frac{3q_{n+1}q_n}{\rho_n^2}.
\end{equation}
Now combination of \eqref{lenIneq} and \eqref{ProofCanSyst2} gives the desired inequalities.
\begin{flushright}
$\Box$
\end{flushright}

\subsection{Proof of Theorem \ref{ThrRevearsedCanon}}
This proof is analogous to the proof of Theorem \ref{ThrCanon}. We again proof \eqref{lenIneq} and \eqref{ProofCanSyst2} and deduce Theorem \ref{ThrRevearsedCanon} from them.
\begin{flushright}
$\Box$
\end{flushright}



\end{document}